\newtheorem{thm}{Theorem}[section]
\newtheorem{lem}[thm]{Lemma}
\newtheorem{prop}[thm]{Proposition}
\theoremstyle{definition}
\theoremstyle{definition}
\def\FF{\mathbb{F}}
\def\NN{\mathbb{N}}
\DeclareMathOperator{\ad}{ad}
\def\l{\langle}
\def\r{\rangle}
\keywords{Engel elements, Lie algebras}
\subjclass[2010]{20F45, 20F40}
\begin{document}

\title[Left 3-Engel elements in locally finite 2-groups]{Left $3$-Engel elements in locally finite $2$-groups}

\author[A. Hadjievangelou]{Anastasia Hadjievangelou}
\address{Department of Mathematical Sciences, University of Bath, Claverton Down, Bath BA2 7AY, United Kingdom}
\email{ah926@bath.ac.uk}

\author[G. Traustason]{Gunnar Traustason}
\address{Department of Mathematical Sciences, University of Bath, Claverton Down, Bath BA2 7AY, United Kingdom}
\email{gt223@bath.ac.uk}

\begin{abstract}
We give an infinite family of examples that generalise the construction given in \cite{GGM} of a locally finite 2-group $G$ containing a left $3$-Engel element $x$ where ${\langle x \rangle}^G$, the normal closure of $x$ in $G$, is not nilpotent. The construction is based on a family of Lie algebras that are of interest in their own right and make use of a classical theorem of Lucas, regarding when $\binom{m}{n}$ is even.
\end{abstract}

\maketitle

\section{Introduction}
Let $G$ be a group. An element $a\in G$ is a left Engel element in $G$, if for
each $x\in G$ there exists a non-negative integer $n(x)$ such that
      $$[[[x,\underbrace {a],a],\ldots ,a]}_{n(x)}=1.$$
If $n(x)$ is bounded above by $n$ then we say that $a$ is a left $n$-Engel element
in $G$. Throughout this paper we will assume that, when dealing with commutators or Lie products, these are left normed. Recall that the Hirsch-Plotkin radical of a group $G$ is the subgroup generated by all the normal locally nilpotent subgroups of $G$ and that this is also locally nilpotent.  It is straightforward to see that any element of the Hirsch-Plotkin
radical $HP(G)$ of $G$ is a left Engel element and the converse is known
to be true for some classes of groups, including solvable groups and 
finite groups (more generally groups satisfying the maximal condition on
subgroups) \cite{Gru, Baer}. The converse is however not true in general and this is the case
even for bounded left Engel elements. In fact whereas one sees readily that 
a left $2$-Engel element is always in the Hirsch-Plotkin radical this
is still an open question for left  $3$-Engel elements. Recently there has been a breakthrough and in \cite{Jab} it is shown that any left $3$-Engel element of odd order is contained in $HP(G)$. From \cite{Trac} one also knows that in order to generalise this to left $3$-Engel elements of any finite order it suffices to deal with elements of
order $2$. \\ \\
It was observed by William
Burnside \cite{Burn} that every element in a group of exponent $3$  is a left $2$-Engel
element and so the fact that every left $2$-Engel element lies in the Hirsch-Plotkin radical can be seen as the underlying reason why groups of exponent
$3$ are locally finite. For groups of $2$-power exponent there is a close link
with left Engel elements. If  $G$ is a group of exponent 
$2^{n}$ then it is not difficult to see that any element $a$ in $G$ of order $2$ is a left $(n+1)$-Engel element of $G$ (see the introduction of \cite{Trau1} for details). For sufficiently large $n$ we know that the variety of groups of exponent $2^{n}$ is not locally finite \cite{Ivan, Lys}. As a result one can see (for example in \cite{Trau1}) that it follows that for sufficiently large $n$ we do not have in general that a left $n$-Engel element is contained in the Hirsch-Plotkin radical. Using the fact that groups of exponent $4$ are locally finite \cite{San}, one can also see that if all left $4$-Engel elements of a group $G$ of exponent $8$ are in
$HP(G)$ then $G$ is locally finite. \\ \\
Swapping the role of $a$ and $x$ in the definition of a left Engel element we get the notion of a right Engel element. Thus an element $a\in G$ is a right Engel element, if for each $x\in G$ there exists a non-negative 
integer $n(x)$ such that 
    $$[[[a,\underbrace {x],x],\ldots ,x]}_{n(x)}=1.$$
If $n(x)$ is bounded above by $n$, we say that $a$ is a right $n$-Engel element. By a classical result
of Heineken \cite{Hein1} one knows that if $a$ is a right $n$-Engel element in $G$ then $a^{-1}$ is a left $(n+1)$-Engel
element. \\ \\
In \cite{New} M. Newell proved that if $a$ is a right $3$-Engel element in $G$ then $a\in HP(G)$ and in
fact he proved the stronger result that $\langle a\rangle^{G}$ is nilpotent of class at most $3$. The natural
question arises whether the analogous result holds for left $3$-Engel elements. In \cite{GGM} it is shown that  this is
not the case by giving an example of a locally finite $2$-group with a left $3$-Engel element $a$ such that
$\langle a\rangle^{G}$ is not nilpotent. Moreover in \cite{GAM} an example is given, for each odd prime $p$,  of a locally finite $p$-group containing a left 3-Engel element $x$ where $\langle x \rangle^G$ is not nilpotent. \\ \\
In  this paper we extend the example above, of a $2$-group, to an infinite family of examples. The construction will be based on
a family of Lie algebras that generalize the Lie algebra given in \cite{Trau2}. These algebras are of interest in their own right and will make use of a classical theorem of Lucas. Before stating Lucas's Theorem we need some notation. \\ \\
Let $p$ be a prime and consider non-negative integers $m$ and $n$ written in base $p$
\begin{eqnarray*}
m &=& m_0+m_1p+\dotsb +m_{k-1}p^{k-1}+ m_kp^k\\
n &=& n_0+n_1p+\dotsb +n_{k-1}p^{k-1}+ n_kp^k,
\end{eqnarray*}
where $0\leqslant m_0, \dotsc, m_k, n_0, \dotsc, n_k \leqslant p-1$.
We introduce a partial order $\leqslant_p$, where  $n\leqslant_p m$ if and only if $n_i \leqslant m_i$, for $0 \leqslant i \leqslant k$.

\begin{thm}[Lucas' Theorem] {The binomial coefficient $\binom{m}{n}$ is divisible by p  if and only if $n \nleqslant _p m$.}\label{Lucas}
\end{thm}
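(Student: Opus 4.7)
The plan is to prove Lucas's theorem via the standard generating-function approach in $\mathbb{F}_p[x]$, which avoids combinatorial case analysis and instead reduces everything to the Frobenius identity.

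First I would establish the key congruence $(1+x)^p \equiv 1+x^p \pmod{p}$ in $\mathbb{Z}[x]$. This is immediate from the binomial theorem together with the observation that $\binom{p}{k}$ is divisible by $p$ for $1 \leqslant k \leqslant p-1$, since the numerator $p!$ contains the factor $p$ while the denominator $k!(p-k)!$ does not. Iterating this congruence gives $(1+x)^{p^i} \equiv 1 + x^{p^i} \pmod{p}$ for every $i \geqslant 0$.

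Next, writing $m = m_0 + m_1 p + \dotsb + m_k p^k$ in base $p$, I would compute
\begin{equation*}
(1+x)^m = \prod_{i=0}^{k} \bigl((1+x)^{p^i}\bigr)^{m_i} \equiv \prod_{i=0}^{k} (1 + x^{p^i})^{m_i} \pmod{p}.
\end{equation*}
Expanding the right-hand side as $\prod_{i=0}^{k} \sum_{j_i=0}^{m_i} \binom{m_i}{j_i} x^{j_i p^i}$, the coefficient of $x^n$ comes from tuples $(j_0, \dotsc, j_k)$ with $j_i \leqslant m_i \leqslant p-1$ and $\sum j_i p^i = n$. Here I would invoke uniqueness of base-$p$ expansions: the only such tuple, if one exists, is $(n_0, \dotsc, n_k)$, and this exists precisely when $n_i \leqslant m_i$ for all $i$, i.e. when $n \leqslant_p m$. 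Comparing the coefficient of $x^n$ on both sides therefore yields
\begin{equation*}
\binom{m}{n} \equiv \prod_{i=0}^{k} \binom{m_i}{n_i} \pmod{p},
\end{equation*}
with the convention that $\binom{m_i}{n_i} = 0$ when $n_i > m_i$.

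Finally, since $0 \leqslant m_i, n_i \leqslant p-1$, each factor $\binom{m_i}{n_i}$ with $n_i \leqslant m_i$ is a nonzero integer strictly less than $p$, hence a unit mod $p$. So the product vanishes mod $p$ if and only if at least one $n_i$ exceeds the corresponding $m_i$, which is exactly the statement that $n \not\leqslant_p m$. The mildly delicate step is the uniqueness appeal when extracting the coefficient of $x^n$, but once one notes that the digit bounds $j_i \leqslant m_i \leqslant p-1$ force the $(j_i)$ to be the base-$p$ digits of $n$, no further work is needed.
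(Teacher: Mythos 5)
The paper itself states Lucas' Theorem as a classical fact and offers no proof, so there is no in-paper argument to compare against; your generating-function proof is the standard one and its structure is sound. The Frobenius congruence $(1+x)^{p}\equiv 1+x^{p}\pmod p$, its iterate $(1+x)^{p^{i}}\equiv 1+x^{p^{i}}\pmod p$, the factorisation of $(1+x)^{m}$ over the base-$p$ digits of $m$, and the appeal to uniqueness of base-$p$ expansions to identify the coefficient of $x^{n}$ are all correct and correctly ordered, and they do yield $\binom{m}{n}\equiv\prod_{i=0}^{k}\binom{m_i}{n_i}\pmod p$ with the convention $\binom{m_i}{n_i}=0$ when $n_i>m_i$. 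The point you single out as delicate (that $j_i\leqslant m_i\leqslant p-1$ forces $(j_0,\dotsc,j_k)$ to be the digit string of $n$) is indeed the only place where care is needed, and you handle it properly.

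The one flaw is in the justification of the last step: you assert that for $n_i\leqslant m_i\leqslant p-1$ the factor $\binom{m_i}{n_i}$ is a nonzero integer \emph{strictly less than} $p$. That is false in general; already for $p=5$, $m_i=4$, $n_i=2$ one has $\binom{4}{2}=6>5$. Fortunately the inequality is not what you need: it suffices that $p\nmid\binom{m_i}{n_i}$, and this holds because $\binom{m_i}{n_i}\,n_i!\,(m_i-n_i)!=m_i!$, so $\binom{m_i}{n_i}$ divides $m_i!$, which is coprime to the prime $p$ since $m_i\leqslant p-1$. With that one-line repair the equivalence ``$p\mid\binom{m}{n}$ iff some $n_i>m_i$ iff $n\nleqslant_p m$'' goes through exactly as you state it. (In the case $p=2$ actually used in the paper your inequality happens to be harmless, since every nonzero factor $\binom{m_i}{n_i}$ equals $1$.)
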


\noindent {\bf Remark.} Notice that when $p=2$ we get that the binomial coefficient $\binom{m}{n}$ is odd if and only if $n \leqslant _2 m$.

	\section{The Lie Algebra $L$}

In this section we construct a family of Lie algebras that extend the example given in  \cite{Trau2}. The construction makes an interesting use of Lucas' Theorem. \\

Let $\FF$ be the field of order 2 and let $n=2^m-2$, for $m \geqslant 2$. Consider the $(n+2)$-dimensional vector space $L=\FF v(0)+\FF v(1)+ \dotsb +\FF v(n-1) + \FF w + \FF x$. We equip $L$ with a binary product where

\begin{eqnarray*}
v(i) \cdot v(j) &=& \binom{j+1}{n-i} v(i \oplus j) \\
v(i) \cdot  w=w\cdot v(i) &=& v(i+1), \quad i=0, 1, \dotsc, n-2\\
v(n-1) \cdot w=w\cdot v(n-1) &=& w\\
v(i) \cdot x=x\cdot v(i) &=& 0 \\
w \cdot x=x\cdot w &=& v(0),
\end{eqnarray*}
such that $i \oplus j \equiv i+j \pmod{n-1}$ and $i \oplus j \in \{ 0, \dotsc, n-2 \}$, and where $ww=xx=0$. 
We then extend the product linearly on $L$.  The next theorem is our first main result. 

\begin{thm}
{L is a Lie algebra over $\FF$.}
\end{thm}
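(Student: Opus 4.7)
The plan is to verify that the product defined on $L$ makes it a Lie algebra over $\FF_2$, which amounts to checking: (i) $y \cdot y = 0$ for every $y \in L$; and (ii) the Jacobi identity $(ab)c + (bc)a + (ca)b = 0$ on all triples. Throughout, the main tool is Lucas's Theorem combined with the arithmetic coincidence $n + 1 = 2^m - 1$.

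\textbf{Alternating property.} By bilinearity it suffices to verify that every basis square vanishes and that the product is symmetric on distinct basis pairs (in characteristic $2$, symmetry coincides with antisymmetry). The identities $ww = xx = 0$, $v(i)w = wv(i)$, $v(i)x = xv(i)$, and $wx = xw$ are built into the definition. The non-trivial check is
\[
v(i) \cdot v(i) = \binom{i+1}{n-i}\, v(i \oplus i).
\]
Here $(i+1) + (n-i) = 2^m - 1$, so in the $m$-bit binary representation the digit strings of $i+1$ and $n-i$ are bitwise complements. By Lucas, $\binom{i+1}{n-i}$ is odd only when $n-i \leqslant_2 i+1$, which under bitwise complementation forces $i+1 = 2^m - 1$; but this lies outside $0 \leqslant i \leqslant n-1$. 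Hence the coefficient is always even. The same complementation argument shows $\binom{j+1}{n-i} \equiv \binom{i+1}{n-j} \pmod{2}$, giving the remaining symmetry $v(i) \cdot v(j) = v(j) \cdot v(i)$.

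\textbf{Jacobi identity.} Let $J(a,b,c) := (ab)c + (bc)a + (ca)b$. By cyclic symmetry and the vanishing of $J$ on triples with a repeated argument (once antisymmetry is known), I only need to check $J$ on triples of distinct basis vectors. The cases involving $x$ are immediate: $J(v(i), v(j), x)$ collapses to $(v(i)v(j)) \cdot x = 0$ since every product of $x$ with a $v$ vanishes, and $J(v(i), w, x)$ splits according to whether $i \leqslant n - 2$ (the only surviving summand is $\binom{i+1}{n} v(i) = 0$ as $i+1 < n$) or $i = n - 1$ (it becomes $v(0) + \binom{n}{n}v(0) = 0$, using $0 \oplus (n-1) = 0$). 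The intermediate case $J(v(i), v(j), w)$ is handled by expanding each summand, splitting on whether $i$ or $j$ equals $n-1$ and on whether $i + j$ wraps under $\oplus$; Pascal's identity $\binom{j+1}{n-i-1} + \binom{j+1}{n-i} = \binom{j+2}{n-i}$ together with Lucas's Theorem gives the required vanishing in each subcase.

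\textbf{Principal case and main obstacle.} The hardest case is $J(v(i), v(j), v(k))$. Since $\oplus$ is associative, every summand is proportional to $v(i \oplus j \oplus k)$ and Jacobi reduces to the congruence
\[
\binom{j+1}{n-i}\binom{k+1}{n-(i \oplus j)} + \binom{k+1}{n-j}\binom{i+1}{n-(j \oplus k)} + \binom{i+1}{n-k}\binom{j+1}{n-(k \oplus i)} \equiv 0 \pmod{2}.
\]
I would prove this by case analysis on which of the pairwise sums $i+j$, $j+k$, $k+i$ wrap modulo $n-1$, computing each binomial's parity via Lucas. The pivotal structural fact is that $n + 1 = 2^m - 1$ has every bit equal to $1$, so whenever $a + b \leqslant 2^m - 1$ the binomial $\binom{a}{b}$ is odd precisely when the binary supports of $a$ and $b$ are disjoint. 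This is exactly what makes the choice $n = 2^m - 2$ work uniformly: the $2$-adic bookkeeping in the three-$v$ Jacobi identity closes up only because of this constraint, and checking the three-term cancellation across the various wrap subcases is the central technical content of the theorem.
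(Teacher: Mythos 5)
Your outline matches the paper's strategy in broad strokes (alternating property via the complementation $(i+1)+(n-i)=2^m-1$ and Lucas, the $x$-cases, the $(w,v(i),v(j))$ case via Pascal's rule), and those parts are fine. But the proposal has a genuine gap exactly where the theorem's real content lies: the three-$v$ case. You correctly reduce Jacobi there to the parity congruence
\[
\binom{j+1}{n-i}\binom{k+1}{n-(i\oplus j)}+\binom{k+1}{n-j}\binom{i+1}{n-(j\oplus k)}+\binom{i+1}{n-k}\binom{j+1}{n-(k\oplus i)}\equiv 0 \pmod 2,
\]
but then you only announce that you ``would'' settle it by a case analysis on which pairwise sums wrap, and you call this ``the central technical content of the theorem''. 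That is a plan, not a proof. The paper's argument here is genuinely delicate: assuming the first coefficient $\alpha_1$ is odd, one first shows $i+j\geqslant n-1$; the boundary case $i+j=n-1$ forces $k=n-1$ and the three coefficients sum to $n+2=2^m\equiv 0$; in the case $i+j\geqslant n$ one introduces the least bit index $t$ with $a_t+b_t=1$ (writing $i+1=\sum a_l2^l$, etc.), deduces $c_0=\dotsb=c_{t-1}=1$ and $a_l+b_l+c_l\geqslant 2$ for $l\geqslant t+1$, and then splits on $c_t\in\{0,1\}$ to show that \emph{exactly one} of $\alpha_2,\alpha_3$ is odd. Nothing in your proposal supplies this three-term cancellation mechanism, and it does not follow formally from the reduction you state.

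Moreover, the one structural fact you offer in support of that case is false as stated: ``whenever $a+b\leqslant 2^m-1$, $\binom{a}{b}$ is odd precisely when the binary supports of $a$ and $b$ are disjoint'' fails, e.g., for $\binom{5}{2}=10$, even though $5=101_2$ and $2=010_2$ have disjoint supports. By Lucas, $\binom{a}{b}$ is odd iff $b\leqslant_2 a$ (support of $b$ contained in that of $a$); the disjoint-support criterion is Kummer's theorem for $\binom{a+b}{a}$. The binomials occurring above are of the form $\binom{j+1}{\,n+1-(i+1)\,}$, where $n+1-(i+1)$ is the bitwise complement of $i+1$, so the correct translation is that the supports of $i+1$ and $j+1$ must jointly cover all $m$ bit positions ($a_l+b_l\geqslant 1$ for all $l$). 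Getting this translation right is precisely what drives the paper's index-$t$ bookkeeping, so you would need to redo this part from the correct parity criterion before the principal case can be considered proved.
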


\begin{proof}
Let $0 \leqslant i, j, k \leqslant n-1$ and suppose that
\begin{eqnarray*}
i+1 &=&  a_0+2a_1 +\dotsb +2^{m-1}a_{m-1}\\
j+1 &=& b_0+2b_1+\dotsc +2^{m-1}b_{m-1}\\
k+1 &=& c_0+2c_1 +\dotsb +2^{m-1}c_{m-1}\\
n+1-(i+1) &=& (1-a_0)+2(1-a_1)+\dotsb +2^{m-1}(1-a_{m-1})\\
n+1-(j+1) &=& (1-b_0)+2(1-b_1)+\dotsb +2^{m-1}(1-b_{m-1})\\
n+1-(k+1) &=& (1-c_0)+2(1-c_1)+\dotsb +2^{m-1}(1-c_{m-1}),\\
\end{eqnarray*}
where $0 \leqslant a_i, b_i, c_i \leqslant 1$ for $0 \leqslant i \leqslant m-1$.
In order to show that the product is alternating, it only remains  to see that $v(i)\cdot v(i)=0$ and $v(i)\cdot v(j)=v(j)\cdot v(i)$ (recall that the characteristic is $2$). Firstly 
\[v(i) \cdot v(i)=\left(\! \begin{array}{c} i+1 \\ n+1-(i+1) \end{array} \! \right) v(i \oplus i).\]
In order for the product to be zero we know by Lucas' Theorem that we need $n+1-(i+1) \nleqslant_2 i+1$. Assume for contradiction
that $(n+1)-(i+1)\leqslant_2 i+1$. Then $1-a_i \leqslant a_i$, for all $0\leqslant i \leqslant m-1$ which implies that $a_0=a_1=\dotsb =a_{m-1}=1$. This gives $i+1=1+2+\dotsb +2^{m-1}=n+1$ that contradicts $i\leqslant n-1$. \\ \\
Now, for $v(j) \cdot v(i)=v(i)\cdot v(j)$, we need 
\[\left( \! \begin{array}{c} i+1\\ n-j \end{array} \! \right) = \left( \! \begin{array}{c} j+1\\ n-i \end{array} \! \right).\]
\[\begin{aligned}
\text{But, } \binom{i+1}{n+1-(j+1)} \text{ is odd }  &\iff 1-b_i \leqslant a_i, \quad \text{ for all } 0 \leqslant i \leqslant m-1\\
&\iff 1-a_i \leqslant b_i, \quad \text{ for all } 0 \leqslant i \leqslant m-1\\
&\iff \left( \! \begin{array}{c} j+1\\n+1-(i+1) \end{array} \! \right) \text{ is odd}.
\end{aligned}\]
Having established that the product is alternating we turn to the Jacobi identity .  If we have basis elements $e, f$ then, as $e\cdot e=0$ and $e\cdot f=f\cdot e$, we get  $(e\cdot e)\cdot f+(e\cdot f)\cdot e+(f\cdot e)\cdot e=2(e\cdot f)\cdot e=0$. Therefore, we only need to deal with the cases when the three basis elements are different. We will divide our anaysis into few cases. Notice first, that any Jacobi relation involving one occurrence of  $x$ and two $v(i)$'s, for $0 \leqslant i \leqslant n-1$, is clearly 0. There is one remaining type of a Jacobi relation that involves $x$. This is the one for a triple $(x,w,v(i))$, where $0\leq i\leq n-1$.  \\ \\
\noindent First, let $0 \leqslant i \leqslant n-2$. We have, $v(i) w x+ w x v(i) + x v(i) w = v(i +1) x + v(0) v(i) =\binom{i+1}{n}v(i)=0$, since $i+1  \leqslant n-1 < n$ and thus $n \nleqslant_2 i+1$. If $i=n-1$, then $v(n-1) w x+w x v(n-1)+ x v(n-1) w = w x + v(0) v(n-1) = v(0) +\binom{n}{n} v(0) = 0 $. \\ \\
Let us next consider triples of the type $(w,v(i),v(j))$ where $0 \leqslant i,j\leqslant n-1$. \\

\noindent {\bf Case 1:} Let $i=n-1$ and $0 \leqslant j\leqslant n-2$. Then, 
\begin{eqnarray*}
&&w v(n-1) v(j) + v(n-1) v(j) w +v(j) w v(n-1) \\
&=&  w v(j) +\binom{j+1}{1}v(j) w +v(j+1) v(n-1)\\
&=& v( j +1) + (j + 1) v(j+1) +\binom{j+2}{1} v(j +1)\\
&=& 2(j+2) v(j + 1)=0.
\end{eqnarray*}\\
{\bf Case 2:} Let $0 \leqslant i < j \leqslant n-2$. Then,
\begin{eqnarray*}
&&w v(i) v(j) + v(i) v(j) w+ v(j) w v(i) \\
&=& v(i +1) v(j) +\binom{j+1}{n-i} v(i\oplus j) w  + v(j+ 1) v(i) \\
&=& \binom{j+1}{n-i-1} v(i\oplus (j +1))+ \binom{j+1}{n-i}v(i \oplus (j + 1))+v(i) v(j+ 1)\\
&=& \binom{j+1}{n-i-1} v(i\oplus (j +1))+ \binom{j+1}{n-i} v(i \oplus (j + 1))+\binom{j+2}{n-i} v(i \oplus (j +1)) \\
&=& 2\binom{j+2}{n-i} v(i \oplus (j +1))=0,  
\end{eqnarray*}
where the last equality follows from Pascal's Rule. \\ \\

\noindent Finally, consider $v(i) v(j) v(k) + v(j) v(k) v(i) + v(k) v(i) v(j) = \alpha_1 v(i \oplus j \oplus k)+\alpha_2 v(i \oplus j \oplus k)+ \alpha_3 v(i \oplus j \oplus k)$, for $0 \leqslant i,j,k \leqslant n-1$.
Clearly, if all coefficients $\alpha_i$ are even then the Jacobi identity holds. So, assume without loss of generality that $\alpha_1$ is odd. Then, as $v(i)v(j) \neq 0$ we get $1\leqslant a_i+b_i$, for $i=0, \dotsc, m-1$. Then, $(i+1)+(j+1)=(a_0+b_0)+2(a_1+b_1)+\dotsb+2^{m-1} (a_{m-1}+b_{m-1})\geqslant 1+2+\dotsb+2^{m-1}=n+1$, so $i+j \geqslant n-1$.\\

\noindent {\bf Case 1:} Consider the case where $i+j=n-1$.
Then, $v(i) v(j) v(k) =v(0) v(k) =\binom{k+1}{n}v(k \oplus 0)$. Notice that $\binom{k+1}{n}$ is odd if and only if $k=n-1$. Hence $k=n-1$ and $v(i) v(j) v(k) = v(0 \oplus (n-1)) = v(0) $. Thus,
\begin{eqnarray*}
&&v(i) v(j) v(k) + v(j) v(k) v(i) + v(k) v(i) v(j) \\
&=& v(0) + (j+1) v(j) v(i) +(i+1) v(i)v(j) \\
&=& v(0) +(j+1) v(0) +(i+1) v(0) \\
&=& (n+2)v(0) = 2^{m} v(0) =0,
\end{eqnarray*}
as required.\\

\noindent{\bf Case 2:} Consider the case where $i+j \geqslant n$. We want to show that if $\alpha_1$ is odd then exactly one of $\alpha_2, \alpha_3$ is odd.  We shall consider each term separately. We have
\begin{eqnarray*}
&& v(i) v(j) v(k) =\binom{j+1}{n+1-(i+1)}v(i \oplus j) v(k) \\
&=&\binom{j+1}{n+1-(i+1)} \binom{k+1}{n+1-(i+j-n+2)} v(i \oplus j \oplus k), \text{ since } i+j \geqslant n\\
&=&\binom{j+1}{n+1-(i+1)} \binom{k+1}{2(n+1)-1-(i+1+j+1)} v(i \oplus j \oplus k).
\end{eqnarray*}
We assumed that $\alpha_1$ must be odd, hence we need both binomial coefficients to be odd. We have $\binom{j+1}{n+1-(i+1)}$ is odd if and only if $1 \leqslant a_i+b_i$, for all $i$. Let $t$ be the smallest index such that $a_t+b_t=1$. Therefore, $a_0=\dotsb=a_{t-1}=b_0=\dotsb=b_{t-1}=1$.\\
In order for $\binom{k+1}{2(n+1)-1-(i+1+j+1)}$ to be odd we must have that $2(n+1)-1-(i+1+j+1) \leqslant_2 k+1$. So,
\begin{eqnarray*}
&&2(n+1)-1-(i+1+j+1)\\
&=&2(1+2+2^2+\dotsb+2^{m-1})-1-((a_0+b_0)+2(a_1+b_1)+\dotsb+2^t+\\
&&\dotsb+2^{m-1}(a_{m-1}+b_{m-1}))\\
&=& 2^t-1+2^{t+1}(2-a_{t+1}-b_{t+1})+\dotsb+2^{m-1}(2-a_{m-1}-b_{m-1})\\
&=& 1+2+2^2+\dotsb+2^{t-1}+2^{t+1}(2-a_{t+1}-b_{t+1})+\dotsb\\
&& +2^{m-1}(2-a_{m-1}-b_{m-1})\\
&\leqslant_2& c_0+2c_1+\dotsb+2^{t-1}c_{t-1}+2^t c_t+2^{t+1}c_{t+1}+\dotsb+2^{m-1}c_{m-1},
\end{eqnarray*}
hence, $c_0=c_1=\dotsb=c_{t-1}=1$ and $2\leqslant a_i+b_i+c_i$, for all $i\geqslant t+1$. Notice that it follows in particular that 
$1\leq a_{i}+c_{i},b_{i}+c_{i}$ for all $i$ except possibly $i=t$. \\ \\
Notice that  the assumption $2(n+1)-1-(i+1+j+1) \leqslant_2 k+1$ implies that $2(n+1)-1-(i+1+j+1) \leqslant k+1$. That is $2(n-1)\leqslant i+j+k$. Then we must have that $j+k\geqslant n-1$ and $i+k\geqslant n-1$. If there is an equality, for example $j+k=n-1$, then, by symmetry, we have already shown in Case 1 that the Jacobi identity holds. So, we may assume that $j+k, i+k \geqslant n$.
Hence, similarly as above, $v(j) v(k) v(i) =\binom{k+1}{n+1-(j+1)} \binom{i+1}{2(n+1)-1-(k+1+j+1)} v(i \oplus j \oplus k)$ and $v(k) v(i) v(j) =\binom{i+1}{n+1-(k+1)}\binom{j+1}{2(n+1)-1-(k+1+i+1)} v(i \oplus j \oplus k)$.

\noindent We have two cases:
\begin{itemize}
\item[(a)]{\underline{$c_t=0$}:} Assume without loss of generality that $a_t=0$ and $b_t=1$.  Then, $a_t+c_t=0 \ngeqslant 1$ and $b_t+c_t=1$, hence $\binom{k+1}{n+1-(j+1)}$ is odd and $\binom{i+1}{n+1-(k+1)}$ is even. Notice that this implies that  the smallest index $s$ such that $b_s+c_s=1$ is $s=t$. Hence, $\binom{i+1}{2(n+1)-1-(j+1+k+1)}$ is odd, since $2\geqslant a_i +b_i+c_i$, for all $i \geqslant t+1$ and $a_0=\dotsb=a_{t-1}=1$. This shows that the Jacobi identity holds.

\item[(b)]{\underline{$c_t=1$}:} Assume without loss of generality that $a_t=0$ and $b_t=1$. Then, both $a_t+c_t, b_t+c_t \geqslant 1$, hence both binomials coefficients  $\binom{k+1}{n+1-(j+1)}$ and $\binom{i+1}{n+1-(k+1)}$ are odd. Now, similarly as in case (a), if $s$ is the smallest index such that $a_s+c_s=1$, then $s=t$ and so $\binom{j+1}{2(n+1)-1-(i+1+k+1)}$ is odd. It only remains to show that $\binom{i+1}{2(n+1)-1-(j+1+k+1)}$ is even. But, $b_t+c_t=2$, so the smallest index $l$ such that $b_l+c_l=1$ is $l\geqslant t+1$. Then, in order for $\binom{i+1}{2(n+1)-1-(j+1+k+1)}$ to be odd we require $a_0=a_1=\dotsb=a_t=1$, which contradicts our assumption, hence $\binom{i+1}{2(n+1)-1-(j+1+k+1)}$ must be even and the Jacobi identity holds.
\end{itemize}

\end{proof}

\begin{lem}
The Lie algebra $L$ has trivial center. 
\end{lem}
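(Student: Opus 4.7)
The plan is to take a putative central element $z = \sum_{i=0}^{n-1}\alpha_i v(i) + \beta w + \gamma x$ and extract enough linear constraints from the conditions $z\cdot x = 0$ and $z\cdot w = 0$ to force all coefficients to vanish. In fact, these two products alone should suffice; there is no need to look at $z\cdot v(i)$.

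First I would use $z\cdot x = 0$. Since $v(i)\cdot x = 0$ for every $i$ and $x\cdot x = 0$, this product reduces to $\beta(w\cdot x) = \beta v(0)$. Because $v(0)$ is a basis element, this immediately gives $\beta = 0$, and we are left with $z = \sum_{i=0}^{n-1}\alpha_i v(i) + \gamma x$.

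Next I would compute $z\cdot w$. Using $v(i)\cdot w = v(i+1)$ for $0\leqslant i\leqslant n-2$, $v(n-1)\cdot w = w$, and $x\cdot w = v(0)$, one obtains
\[
z\cdot w \;=\; \gamma\, v(0) + \sum_{i=0}^{n-2}\alpha_i\, v(i+1) + \alpha_{n-1}\, w.
\]
The elements $v(0), v(1),\dotsc,v(n-1), w$ are linearly independent, so reading off coefficients forces $\gamma = 0$, $\alpha_0 = \alpha_1 = \dotsb = \alpha_{n-2} = 0$, and $\alpha_{n-1} = 0$. Hence $z = 0$, and $Z(L) = 0$.

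There is no real obstacle here: the key observation is simply that $x$ already kills $\beta$ by isolating the $w$--$x$ product, and then $w$ acts on the remaining basis by an injective shift (plus a nontrivial image of $x$ onto $v(0)$), so no nonzero combination of $v(i)$'s and $x$ can lie in its kernel. The only thing worth double-checking is the anomalous rule $v(n-1)\cdot w = w$, which is what produces the $\alpha_{n-1}w$ term and ensures that $\alpha_{n-1}$ is pinned down too.
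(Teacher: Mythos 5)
Your proof is correct and is essentially identical to the paper's own argument: multiply the putative central element by $x$ to kill the coefficient of $w$, then multiply by $w$ and use linear independence of $v(0),\dotsc,v(n-1),w$ to kill the remaining coefficients. Nothing further is needed.
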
 

\begin{proof}
    Take an element of $L$ say $l=\lambda_{-1} x+\lambda_0  v(0)+\dotsm + \lambda_{n-1} v(n-1)+ \mu w$, where $\lambda_{-1}, \lambda_0, \dotsc, \lambda_{n-1}, \mu \in \FF$, that lies in the center of $L$. Multiplying by $x$ gives $\mu v(0)=0$, therefore $\mu =0$. Then, multiplying by $w$ gives $\lambda_{-1}v(0)+\lambda_0v(1)+\dots+\lambda_{n-2}v(n-1)+\lambda_{n-1}w=0$ and therefore $\lambda_{-1}=\dots=\lambda_{n-1}=0$. 
\end{proof}

\begin{lem}
    $W=\FF v(0) +\dotsb+\FF v(n-1) +\FF w$ is a simple ideal of $L$. 
\end{lem}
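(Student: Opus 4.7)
My plan is to first verify that $W$ is indeed an ideal of $L$, and then show that any nonzero ideal of $L$ contained in $W$ must equal $W$. The first part is immediate from the defining multiplication: products of basis vectors of $W$ among themselves obviously stay in $W$, while the only nontrivial product with $x$ is $w\cdot x = v(0)\in W$ (the other being $v(i)\cdot x = 0$). Hence $W\cdot L\subseteq W$.

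For the simplicity, let $I$ be a nonzero ideal of $L$ contained in $W$ and pick a nonzero element
\[
u = \lambda_0 v(0)+\dotsb+\lambda_{n-1}v(n-1)+\mu w \in I.
\]
The strategy is to first produce $v(0)\in I$, after which repeated multiplication by $w$ sweeps out the rest of $W$: one has $v(0)\cdot w=v(1)$, $v(1)\cdot w=v(2),\dotsc,v(n-2)\cdot w=v(n-1)$, and finally $v(n-1)\cdot w=w$, so every basis vector of $W$ lands in $I$ and therefore $I=W$.

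To produce $v(0)$, I would multiply $u$ by $x$. Since $v(i)\cdot x=0$ and $w\cdot x=v(0)$, this gives $u\cdot x=\mu v(0)$, which handles the case $\mu\neq 0$. If instead $\mu=0$, I reduce to the previous case by right-multiplying $u$ by $w$ to cycle coordinates: a short induction using $v(i)\cdot w = v(i+1)$ for $i<n-1$, $v(n-1)\cdot w = w$, and $w\cdot w=0$ shows that multiplying $u$ by $w$ exactly $k$ times (for $1\leqslant k\leqslant n$) produces an element of $I$ whose $w$-coefficient is $\lambda_{n-k}$. Letting $j$ be the largest index with $\lambda_j\neq 0$ and taking $k=n-j$, one manufactures an element of $I$ with nonzero $w$-component, reducing to the first case and thereby placing $v(0)$ in $I$.

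I do not anticipate any real obstacle; the argument is essentially a book-keeping exercise based on how right-multiplication by $x$ extracts the $w$-coefficient of an element of $W$ and how right-multiplication by $w$ shifts the $v(i)$-indices (with the wrap-around $v(n-1)\mapsto w$). The only minor subtlety is keeping the boundary case $j=0$ straight, but as computed above this corresponds to $k=n$ iterations and causes no trouble.
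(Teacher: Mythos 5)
Your proposal is correct and takes essentially the same approach as the paper: both arguments use the shift action of right-multiplication by $w$ together with the extraction $w\cdot x=v(0)$ to show that the ideal of $L$ generated by any nonzero element of $W$ contains all of $v(0),\dotsc,v(n-1),w$. The only difference is cosmetic — the paper first isolates $w$ (hitting the smallest nonzero coordinate with $n-i$ multiplications by $w$) and then recovers the $v(i)$'s by multiplying by $x,v(0),\dotsc,v(n-2)$, whereas you first obtain $v(0)$ via $x$ and then sweep out the remaining basis vectors with $w$.
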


\begin{proof}Consider the ideal $I$ generated by $y$, where $y=\lambda_0 v(0)+\dotsm + \lambda_{n-1}v(n-1) + \mu w$ and $\lambda_0, \dotsc, \lambda_{n-1}, \mu \in \FF$ are not all zero. We first show that $w\in I$. If $\lambda_{0}=\cdots = \lambda_{n-1}=0$, this  is clear. If not, take the smallest $i$ such that $\lambda_i \neq0$, where $0 \leqslant i \leqslant n-1$. Taking $y$ and mulitiplying $n-i$ times by $w$ gives us $\lambda_{i}w$ that  implies that $w \in I$. Having established that $w\in I$ we can multiply it  by $x,v(0), ...,v(n-2)$ to see that $v(0),\ldots ,
v(n-1)\in I$.  Hence $I = W$.
\end{proof}

Let $E=\langle \mbox{ad}(x), \mbox{ad}(v(0)), \mbox{ad}(v(1)), \dotsc, \mbox{ad}(v(n-1)), \mbox{ad}(w)\rangle \leqslant End(L)$. As $Z(L)$ is trivial, $E$ is the associative enveloping algebra of $L$.

\begin{lem}
The associative enveloping algebra $E$ is finite-dimensional.
\end{lem}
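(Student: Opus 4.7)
The plan is to observe that the lemma is essentially a consequence of the finite-dimensionality of $L$ itself. From the construction, $L$ has the explicit basis $\{x,v(0),v(1),\ldots,v(n-1),w\}$ and hence $\dim_{\FF}L=n+2=2^m$. Consequently the associative algebra $\operatorname{End}(L)$ of $\FF$-linear endomorphisms of $L$ is finite-dimensional, with $\dim_{\FF}\operatorname{End}(L)=(n+2)^{2}=2^{2m}$.

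Since $E$ is defined to be the associative subalgebra of $\operatorname{End}(L)$ generated by the finite set $\{\operatorname{ad}(x),\operatorname{ad}(v(0)),\ldots,\operatorname{ad}(v(n-1)),\operatorname{ad}(w)\}$, it sits inside $\operatorname{End}(L)$. Any subspace of a finite-dimensional vector space is again finite-dimensional, so I would conclude immediately that
\[
\dim_{\FF}E\;\leq\;\dim_{\FF}\operatorname{End}(L)\;=\;2^{2m}<\infty.
\]

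There is no genuine obstacle here. If a sharper estimate were desired one could enumerate spanning monomials in the generators and use the explicit multiplication rules together with Lucas' theorem to cut down the bound, but none of that is required for the stated conclusion; the subalgebra observation alone suffices.
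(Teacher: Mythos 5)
Your proposal is correct and follows essentially the same argument as the paper: $E$ is by definition a subalgebra of $\operatorname{End}(L)$, which has dimension $(n+2)^2$ over $\FF$, so $E$ is finite-dimensional. Nothing further is needed.
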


\begin{proof}
    This follows from the fact that $\mbox{dim}(L)=n+2$, hence $\mbox{dim (End}(L))=(n+2)^2$, thus $E$ must be of finite dimension.
\end{proof}
We will use $L$ to construct a locally nilpotent Lie algebra over $\FF$ of countably infinite dimension. This will then help us to construct a locally finite group $G$ with a left 3-Engel element $y$ where $\langle y \rangle ^G$ is not nilpotent. We now introduce a notation that was 
used in \cite{Trau2} of modified unions of subsets of $\NN$. We let
\[A  \sqcup B=\begin{cases} A \cup B, & \text{ if } A \cap B = \emptyset \\
\emptyset, & \text{ otherwise}
\end{cases}\]
For each non-empty subset $A$ of $\NN$ we let $W_A$ be a copy of the vector space $W=\FF v(0) +\dotsb+\FF v(n-1) +\FF w$, that is $W_A=\{ z_A : z \in W\}$ with addition $z_A+t_A=(z+t)_A$. We then take the direct sum of these 
\[W^* = \underset{ \emptyset \neq A \subseteq \NN}{\bigoplus} W_A\]
that we turn into a Lie algebra with multiplication
\[z_A \cdot t_B=(z\cdot t)_{A \sqcup B}\]
when $z_A\in W_A$ and $t_B \in W_B$ and extend linearly on $W^*$. The interpretation here is that $z_{\emptyset}=0$. Finally, we extend this to the semidirect product with $\FF x$
\[L^*=W^* \oplus \FF x\]
induced from the action $z_A \cdot x =(z\cdot x)_A$.
Notice that $L^*$ has basis 
\[\{x\} \cup \{v(0)_A, \dotsc, v(n-1)_A, w_A: \emptyset \neq A \subseteq \NN \}\]
and that 								
\[v(i)_A \cdot v(i)_B=w_A \cdot w_B=0,\]
\[v(i)_A \cdot x =0, \quad w_A \cdot x=v(0)_A,\]
for all $0 \leqslant i,j  \leqslant n-1$ and 
\[v(i)_A \cdot v(j)_B=\binom{j+1}{n-i} v(i \oplus j)_{A \sqcup B}, \text{ for all $0 \leqslant i, j \leqslant n-1$},\] \[v(i)_A \cdot w_B =v(i+1)_{A \sqcup B}, \text{ for all $0 \leqslant i, j \leqslant n-2$ }\] \[ \text{and } v(n-1)_A \cdot w_B= w_{A \sqcup B}.\]

\begin{lem} 
$L^{*}$ is locally nilpotent.
\end{lem}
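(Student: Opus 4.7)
The plan is to show that every finitely generated subalgebra $H$ of $L^{*}$ is nilpotent, by first embedding $H$ in a suitable finite-dimensional Lie subalgebra of $L^{*}$ and then bounding the length of nonzero left-normed brackets there.

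I first observe that the finitely many generators of $H$ involve only finitely many non-empty subsets $A_{1},\dotsc, A_{r}\subseteq \NN$ (possibly together with $x$). Let $\Omega$ be the collection of non-empty subsets of $\NN$ of the form $A_{i_{1}}\cup\dotsb\cup A_{i_{s}}$ with $A_{i_{1}},\dotsc,A_{i_{s}}$ pairwise disjoint. If $B,C\in\Omega$ satisfy $B\cap C=\emptyset$, then the $A$'s constituting $B$ are automatically disjoint from those constituting $C$, so $B\cup C\in\Omega$. This closure, combined with the fact that $\ad(x)$ preserves each $W_{B}$, shows that
\[ L_{\mathcal{A}}^{*}=\FF x\oplus \bigoplus_{B\in\Omega} W_{B} \]
is a finite-dimensional Lie subalgebra of $L^{*}$ containing $H$, of dimension at most $1+(n+1)2^{r}$.

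Next I would establish two structural facts about $L_{\mathcal{A}}^{*}$. \emph{(i) Length bound.} Assign to each $B\in\Omega$ the length $|B|\leqslant r$ equal to the number of $A_{i}$'s in its disjoint decomposition. Since $z_{B}\cdot t_{C}$ vanishes unless $B\cap C=\emptyset$, in which case it lies in $W_{B\cup C}$ of length $|B|+|C|$, any nonzero basis bracket $[z_{1},\dotsc,z_{N}]$ of $L_{\mathcal{A}}^{*}$ contains at most $r$ entries lying in $\bigcup_{B\in\Omega} W_{B}$. \emph{(ii) Action of $x$.} The rules $v(i)\cdot x=0$ and $w\cdot x=v(0)$ force $\ad(x)^{2}=0$ on $W^{*}$, and combined with $x\cdot x=0$ this gives $\ad(x)^{2}=0$ on $L_{\mathcal{A}}^{*}$; in particular, a nonzero basis bracket cannot have two consecutive entries equal to $x$.

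Combining the two facts, a nonzero basis bracket of length $N$ has $k\leqslant r$ entries from the $W_{B}$'s, and the remaining $x$-entries occupy the $k+1$ gaps around them with at most one $x$ per gap; hence $N\leqslant 2k+1\leqslant 2r+1$. Expanding an arbitrary left-normed bracket of length $2r+2$ by linearity then gives $\gamma_{2r+2}(L_{\mathcal{A}}^{*})=0$, so $H$ is nilpotent. The argument is largely combinatorial bookkeeping; the only subtle point is verifying~(i) in the mixed basis brackets, where interspersed $x$'s move $W^{*}$-elements around but do not alter the indexing subsets of $\NN$, so that the length bound is preserved.
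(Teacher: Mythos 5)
Your proposal is correct and takes essentially the same route as the paper: both arguments rest on the two facts that a nonzero product can contain only boundedly many entries from the spaces $W_B$ (because the modified union forces the index sets involved to be pairwise disjoint) and that $\ad(x)^2=0$, which together bound the length of nonzero left-normed brackets and hence give nilpotency of a finite subalgebra containing the given finitely generated one. The paper simply counts occurrences of the generating elements $v(i)_A$, $w_B$ of its subalgebra $S$ (a repeated entry kills the product) instead of passing to your disjoint-union-closed finite-dimensional hull, but the counting mechanism and the resulting linear class bound are the same.
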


\begin{proof} 
Notice that any finitely generated subalgebra of $L^*$ is contained in some 

\begin{eqnarray*}
    S & = & \langle x, v(0)_{A_1^0}, \dotsc, v(0)_{A_r^0},\dotsc, v(n-1)_{A_1^{n-1}}, \dotsc, v(n-1)_{A_t^{n-1}}, w_{B_1}, \dotsc,w_{B_l} \rangle.
\end{eqnarray*}
Thus it suffices to show that $S$ is nilpotent. Observe first that any Lie product with a repeated entry of $v(i)_{A}$ or $w_{B}$ is trivial and thus a non-trivial Lie product of the generators of $S$ can include in total at most $r+s+\cdots + t+l$ such elements.
As $v(i)_{A}\cdot x=(w_{B}\cdot x)\cdot x=0$ we have $(z\cdot x)\cdot x=0$ for all $z \in L^*$. Thus we see that $S$ is nilpotent of class at most $2(r+\dotsb +t+l)$. Therefore, $L^*$ is locally nilpotent.
\end{proof}

\section{The Group $G$}

For an element $y\in L^{*}$ we denote by $\mbox{ad}(y)$ the linear operator on $L^{*}$ induced by multiplication by $y$ on the right. 
In this section we find a group $G$ inside $\mbox{GL}(L^*)$ containing $1+\mbox{ad}(x)$, where $1+\mbox{ad}(x)$ is a left 3-Engel element in G, but where ${\langle 1+\mbox{ad}(x)\rangle}^G$ is not nilpotent. The next Lemma is a preparation for this.

\begin{lem}\label{lemma 2.2}
The adjoint linear operator $\mbox{ad}(x)$ on $L^*$ satisfies: \\ 
(a) $\mbox{ad}(x)^2=0$.\\
(b) $\mbox{ad}(x)\mbox{ad}(y)\mbox{ad}(x)=0$, for all $y \in L^*$.
\end{lem}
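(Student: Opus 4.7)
The plan is to verify both statements by direct computation on basis elements of $L^{*}$, using the multiplication table listed at the end of Section 2.

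For part (a), I would first observe that on the basis $\{x\} \cup \{v(0)_A, \dots, v(n-1)_A, w_A : \emptyset\ne A\subseteq\NN\}$, the operator $\mbox{ad}(x)$ acts by $x\mapsto 0$, $v(i)_A\mapsto 0$ for each $0\le i\le n-1$, and $w_A\mapsto v(0)_A$. Hence the image of $\mbox{ad}(x)$ is contained in the span of the vectors $v(0)_A$, and since each such vector is itself killed by $\mbox{ad}(x)$, we immediately obtain $\mbox{ad}(x)^{2}=0$. (This is essentially the observation already made in the proof that $L^{*}$ is locally nilpotent.)

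For part (b), by linearity it suffices to show that $(z\cdot x)\cdot y\cdot x=0$ whenever $z$ and $y$ are basis vectors of $L^{*}$. By the description of $\mbox{ad}(x)$ in part (a), the only basis vectors $z$ with $z\cdot x\ne 0$ are the $w_A$'s, and in that case $z\cdot x=v(0)_A$. So the whole content of (b) reduces to showing that $v(0)_A\cdot y \in\ker(\mbox{ad}(x))$ for every basis $y$. I would then run through the three cases: $y=x$ gives $v(0)_A\cdot x=0$; $y=w_B$ gives $v(0)_A\cdot w_B=v(1)_{A\sqcup B}$, which lies in $\ker(\mbox{ad}(x))$; and $y=v(i)_B$ gives $\binom{i+1}{n}v(i)_{A\sqcup B}$, which again lies in $\ker(\mbox{ad}(x))$ (and is moreover zero unless $i=n-1$, in which case the convention $0\oplus(n-1)=0$ produces $v(0)_{A\sqcup B}$, still in $\ker(\mbox{ad}(x))$).

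The only place one has to be careful is in handling the binomial coefficient $\binom{i+1}{n}$ in the last case and the non-obvious rule $0\oplus(n-1)=0$; but neither affects the conclusion, since every output of $v(0)_A\cdot(-)$ lies in $\mbox{span}\{v(j)_C: 0\le j\le n-1\}$, on which $\mbox{ad}(x)$ vanishes. No step is really an obstacle here: the lemma is a bookkeeping check on the multiplication table rather than a structural statement, and its role is simply to prepare for the computation with $1+\mbox{ad}(x)$ in the next section.
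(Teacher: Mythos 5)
Your proposal is correct and follows essentially the same route as the paper: part (a) is the observation that $(z\cdot x)\cdot x=0$ already used for local nilpotency, and part (b) reduces, exactly as in the paper, to checking $w_A\cdot x\cdot v(i)_B\cdot x=0$ and $w_A\cdot x\cdot w_B\cdot x=0$ on basis elements. Your extra care with $\binom{i+1}{n}$ and the case $i=n-1$ (where $0\oplus(n-1)=0$) is fine, since in every case the output lies in the span of the $v(j)_C$'s, which $\mbox{ad}(x)$ annihilates.
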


\begin{proof}
(a) This follows from our earlier observation that $(z\cdot x)\cdot x=0$ for all $z\in L^{*}$. \\
(b) Follows from $w_A \cdot x \cdot v(i)_B \cdot x= v(0)_A \cdot v(i)_B \cdot x=0,  w_A \cdot x \cdot w_B \cdot x= v(0)_A\cdot w_B \cdot x= v(1)_{A \sqcup B} \cdot x=0$.
\end{proof}

Now let $y$ be any of the generators $x, v(i)_A, w_A$, for $0 \leqslant i \leqslant n-1$. Since, $\mbox{ad}(y)^2=0$ for all $y$ it follows that 
\[(1+\mbox{ad}(y))^2=1+2\mbox{ad}(y)+\mbox{ad}(y)^2=1.\]
Thus, $1+\mbox{ad}(y)$ is an involution in $\mbox{GL}(L^*)$. Notice also that the following are elementary abelian $2$-groups of countably infinite rank. 

\begin{eqnarray*}
\mathcal{V}_0 &=& \langle1+\mbox{ad}(v(0)_A): A \subseteq \NN \rangle,\\ 
\mathcal{V}_1 &=& \langle1+\mbox{ad}(v(1)_A): A \subseteq \NN \rangle,\\ 
&\vdots& \\
\mathcal{V}_{n-1} &=& \langle 1+\mbox{ad}(v(n-1)_A): A \subseteq \NN \rangle,\\ 
\mathcal{W} &=& \langle1+\mbox{ad}(w_A): A \subseteq \NN \rangle
\end{eqnarray*}
We will be working with the group $G= \langle 1+\mbox{ad}(x), \mathcal{V}_0, \mathcal{V}_1, \dotsc, \mathcal{V}_{n-1}, \mathcal{W} \rangle$.

\begin{lem}\label{lemma 2.3}
The following commutator relations hold in $G$: \\
(a) $[1+\mbox{ad}(w_A),
1+\mbox{ad}(x)]=1+\mbox{ad}(v(0)_A)$. \\
(b) $[1+\mbox{ad}(v(i)_A), 1+\mbox{ad}(x)]=1$. \\
(c) $[1+\mbox{ad}(v(i)_A), 1+\mbox{ad}(v(j)_B)]=1+\binom{j+1}{n-i} \mbox{ad}(v(i \oplus j)_{A \sqcup B})$. \\
(d) $[1+\mbox{ad}(v(i)_A), 1+\mbox{ad}(w_B)]=1+\mbox{ad}(v(i+1)_{A \sqcup B})$, if $0 \leqslant i \leqslant n-2$. \\
(e) $[1+\mbox{ad}(v(n-1)_A), 1+\mbox{ad}(w_B)]=1+\mbox{ad}(w_{A \sqcup B})$. 
\end{lem}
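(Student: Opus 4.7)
All five identities (a)--(e) share the same shape: each expresses a commutator $[1+\mbox{ad}(u),\,1+\mbox{ad}(v)]$ as $1+\mbox{ad}(u\cdot v)$, where $u,v$ range over the generators $x, v(i)_A, w_A$ and the Lie product $u\cdot v$ is read off from the multiplication table defining $L^{*}$. I plan to prove the uniform identity $[1+\mbox{ad}(u),\,1+\mbox{ad}(v)]=1+\mbox{ad}(u\cdot v)$ once and then instantiate it for each of the five choices of $(u,v)$.

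Write $A=\mbox{ad}(u)$ and $B=\mbox{ad}(v)$. The paragraph preceding the lemma records that $(1+A)^{2}=(1+B)^{2}=1$, so both factors are involutions and
\[
[1+A,\,1+B] \;=\; (1+A)(1+B)(1+A)(1+B) \;=\; \bigl((1+A)(1+B)\bigr)^{2}.
\]
Expanding $(1+A)(1+B)=1+A+B+AB$, squaring in characteristic $2$, and using $A^{2}=B^{2}=0$ collapses this to
\[
[1+A,\,1+B] \;=\; 1 + AB + BA + ABA + BAB + ABAB.
\]
The core step is to show that $ABA=BAB=0$, from which $ABAB=A(BAB)=0$ and $[1+A,\,1+B]=1+AB+BA$. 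If $u=x$ or $v=x$, this is exactly Lemma \ref{lemma 2.2}(b). Otherwise $u$ is of the form $v(i)_A$ or $w_A$ with a non-empty index set $A\subseteq\NN$, and every nonzero monomial in $\mbox{ad}(u)\,\mbox{ad}(v)\,\mbox{ad}(u)$ acquires the set $A$ twice in its final subscript under $\sqcup$; since $A\cap A=A\neq\emptyset$, the repeated $\sqcup$ sends the subscript to $\emptyset$ and annihilates the monomial. The symmetric argument handles $BAB$.

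To finish, the Jacobi identity in $L^{*}$ combined with the fact that antisymmetry in characteristic $2$ is commutativity of the bracket gives
\[
z\cdot(u\cdot v) \;=\; (z\cdot u)\cdot v + (z\cdot v)\cdot u
\]
for every $z\in L^{*}$, that is, $\mbox{ad}(u\cdot v)=AB+BA$. Hence $[1+\mbox{ad}(u),\,1+\mbox{ad}(v)]=1+\mbox{ad}(u\cdot v)$, and substituting the five specific products $w_A\cdot x=v(0)_A$, $v(i)_A\cdot x=0$, $v(i)_A\cdot v(j)_B=\binom{j+1}{n-i}\,v(i\oplus j)_{A\sqcup B}$, $v(i)_A\cdot w_B=v(i+1)_{A\sqcup B}$ (for $0\leq i\leq n-2$), and $v(n-1)_A\cdot w_B=w_{A\sqcup B}$ recovers (a)--(e) in turn. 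The only genuine obstacle is the vanishing of $ABA$ and $BAB$, which I expect to settle cleanly by splitting into the two cases above: Lemma \ref{lemma 2.2}(b) for the cases involving $x$, and the annihilating behaviour of $\sqcup$ on repeated subscripts in all others.
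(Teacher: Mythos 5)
Your argument is correct and is essentially the paper's own proof, made uniform: expand the product of the two involutions using $\mbox{ad}(u)^2=\mbox{ad}(v)^2=0$, show the degree-three and degree-four terms vanish, identify $\mbox{ad}(u)\mbox{ad}(v)+\mbox{ad}(v)\mbox{ad}(u)=\mbox{ad}(u\cdot v)$, and then read off (a)--(e) from the multiplication table of $L^*$, which is exactly what the paper does case by case (your explicit $\sqcup$-annihilation step only spells out what the paper leaves implicit). One small repair: in parts (a) and (b), where $v=x$, Lemma \ref{lemma 2.2}(b) disposes only of $\mbox{ad}(x)\mbox{ad}(u)\mbox{ad}(x)$, while the other triple product $\mbox{ad}(u)\mbox{ad}(x)\mbox{ad}(u)$ is not an instance of that lemma but is handled by your subscript argument (which never uses $v\neq x$), so the case distinction should be according to whether the \emph{outer} factor is $\mbox{ad}(x)$, not according to whether $x$ occurs among $u,v$.
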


\begin{proof}

(a) We have
\begin{eqnarray*}
&& [1+\mbox{ad}(w_A), 1+\mbox{ad}(x)] \\
&=& (1+\mbox{ad}(w_A))\cdot (1+\mbox{ad}(x)) \cdot (1+\mbox{ad}(w_A)) \cdot (1+\mbox{ad}(x))\\
&=& 1+\mbox{ad}(w_A)\mbox{ad}(x)+\mbox{ad}(x)\mbox{ad}(w_A)+\mbox{ad}(x)\mbox{ad}(w_A)\mbox{ad}(x)\\
&=& 1+\mbox{ad}(w_A\cdot x)\\
&=& 1+\mbox{ad}(v(0)_A),
\end{eqnarray*}
where we have used Lemma \ref{lemma 2.2}. Part (b) is proven similarly. For (c) we have
\begin{eqnarray*}
&&[1+\mbox{ad}(v(i)_A), 1+\mbox{ad}(v(j)_B)]\\
&=& (1+\mbox{ad}(v(i)_A)) \cdot (1+\mbox{ad}(v(j)_B) \cdot (1+\mbox{ad}(v(i)_A)) \cdot (1+\mbox{ad}(v(j)_B))\\
&=& 1+\mbox{ad}(v(i)_A)\mbox{ad}(v(j)_B)+\mbox{ad}(v(j)_B)\mbox{ad}(v(i)_A)\\
&=& 1+\mbox{ad}(v(i)_A \cdot v(j)_B)\\
&=& 1+\binom{j+1}{n-i} \mbox{ad}(v(i \oplus j)_{A \sqcup B}).
\end{eqnarray*}

Parts (d) and (e) are proved similarly.

\end{proof}

\noindent {\bf Remark.} Notice that as $L^*$ is locally nilpotent it follows from Lemma \ref{lemma 2.3} that $G$ is locally nilpotent. Next proposition clarifies the structure of $G$.

\begin{prop}
We have $G=\langle1+\mbox{ad}(x)\rangle \mathcal{V}_0 \dotsm \mathcal{V}_{n-1} \mathcal{W}$. In particular, for every element $g \in G$ there exists an expression $g=(1+\mbox{ad}(x))^{\epsilon} \cdot r_0 \dotsm r_{n-1} \cdot s$, with $\epsilon \in \{0,1\}, r_0 \in \mathcal{V}_0, \dotsc, r_{n-1} \in \mathcal{V}_{n-1}$ and $s \in \mathcal{W}$.
\end{prop}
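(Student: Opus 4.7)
The plan is a Philip Hall collection argument, using the commutator relations of Lemma~\ref{lemma 2.3} to bring an arbitrary word in the generators of $G$ into the required order. Fix the ordering
\[
1+\mbox{ad}(x) \;\prec\; \mathcal{V}_0 \;\prec\; \mathcal{V}_1 \;\prec\; \cdots \;\prec\; \mathcal{V}_{n-1} \;\prec\; \mathcal{W}
\]
on the generator classes. For an arbitrary $g\in G$, write it as a finite word in these involutory generators and repeatedly locate an adjacent pair $g_i g_{i+1}$ appearing in the wrong order, rewriting $g_i g_{i+1} = g_{i+1} g_i \cdot [g_i, g_{i+1}]$ and inserting the commutator back into the word as a new letter.

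Lemma~\ref{lemma 2.3} guarantees that every commutator so produced is itself a single generator from one of the listed classes: by (b) each element of $\mathcal{V}_i$ commutes with $1+\mbox{ad}(x)$; by (c) we have $[\mathcal{V}_i,\mathcal{V}_j] \subseteq \mathcal{V}_{i \oplus j}$; by (d) and (e) the commutator $[\mathcal{V}_i,\mathcal{W}]$ lies in $\mathcal{V}_{i+1}$ (when $i \leqslant n-2$) or in $\mathcal{W}$ (when $i = n-1$); and by (a) we have $[\mathcal{W},1+\mbox{ad}(x)] \subseteq \mathcal{V}_0$. Hence each swap either produces nothing new or reinserts a single new letter from an existing class, and the sorting can continue.

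Once the word has been fully sorted, each block of consecutive letters from the same class collapses into a single element: $(1+\mbox{ad}(x))^2 = 1$ reduces its block to $(1+\mbox{ad}(x))^\epsilon$ with $\epsilon\in\{0,1\}$, and because each $\mathcal{V}_i$ and $\mathcal{W}$ is elementary abelian, their respective blocks collapse to single elements $r_i\in\mathcal{V}_i$ and $s\in\mathcal{W}$. This yields the required expression $g = (1+\mbox{ad}(x))^\epsilon r_0 r_1 \cdots r_{n-1} s$.

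The main obstacle is termination, since the operation $i\oplus j$ does not respect $\prec$: swapping an element of $\mathcal{V}_j$ past one of $\mathcal{V}_i$ may create a commutator in some $\mathcal{V}_{i\oplus j}$ with $i\oplus j < i$, reintroducing disorder further to the left. This feedback is controlled by the observation that $g$ lies in a finitely generated subgroup of $G$, which by the Remark following Lemma~\ref{lemma 2.3} is nilpotent of some finite class $c$; every commutator produced during the process lies strictly deeper in the lower central series than its two parents, so after at most $c$ rounds no further non-trivial commutators appear and the collection halts.
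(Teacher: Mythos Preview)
Your proof is correct and uses the same core idea as the paper—a Hall-style collection process driven by the commutator relations of Lemma~\ref{lemma 2.3}—but the termination argument is genuinely different. The paper controls termination concretely: it first pulls $1+\mbox{ad}(x)$ to the left (a separate, easy pass), and then observes that in the remaining sort every commutator produced carries a subscript that is a modified union $A\sqcup B$ of strictly more of the finitely many index sets $\mathcal{S}$ appearing in the original word; once the number of sets exceeds $|\mathcal{S}|$ the modified union is $\emptyset$ and the letter is trivial, so the process halts after $|\mathcal{S}|$ rounds. You instead invoke the abstract local nilpotency of $G$ (from the Remark following Lemma~\ref{lemma 2.3}), noting that each commutator lies strictly deeper in the lower central series than its parents, so in a group of class $c$ the feedback dies out. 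Your route is shorter and sidesteps the bookkeeping with index sets, at the cost of leaning on that Remark (and leaving the phrase ``at most $c$ rounds'' a bit informal); the paper's route is self-contained and makes the combinatorics of the subscripts explicit, which is useful since the same modified-union bookkeeping reappears later in the analysis of $E^{*}$.
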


\begin{proof}  
Suppose that
\[g=g_0(1+\mbox{ad}(x))g_1 \dotsm (1+\mbox{ad}(x))g_n\]
where $g_0, \dotsc, g_n$ are products of elements of the form $1+\mbox{ad}(v(0)_A), \dotsc, 1+\mbox{ad}(v({n-1})_A), 1+\mbox{ad}(w_A)$. From Lemma \ref{lemma 2.3} we know that $(1+\mbox{ad}(w_A))(1+\mbox{ad}(x))=(1+\mbox{ad}(x))(1+\mbox{ad}(w_A))(1+\mbox{ad}(v(0)_A))$ and $(1+\mbox{ad}(x))$ commutes with all products of the form $1+\mbox{ad}(v(i)_A)$, for $0 \leqslant i \leqslant n-1$. We can thus collect the $(1+\mbox{ad}(x))$'s to the left starting with the leftmost occurrence. This may introduce more elements of the form $1+\mbox{ad}(v(0)_A)$, but no new elements $1+\mbox{ad}(x)$. We thus have that 
\[g=(1+\mbox{ad}(x))^ng_1 \dotsm g_m\]
where $g_i$ is of the form $1+\mbox{ad}(v(j)_A)$, for $0 \leqslant j \leqslant n-1$ or of the form  $1+\mbox{ad}(w_A)$. Notice also that we can assume that $n=\epsilon$, where $\epsilon \in \{0,1\}$. This reduces our problem to the case when $g \in \langle \mathcal{V}_0, \dotsc, \mathcal{V}_{n-1}, \mathcal{W} \rangle$. 
Suppose
\[g=g_1g_2 \dotsm g_n,\]
where the terms $g_i$ are $(1+\mbox{ad}(v(0)_{A_1})), \dotsc, (1+\mbox{ad}(v(0)_{A_r})), \dotsc, (1+\mbox{ad}(v(n-1)_{B_1})), \dotsc, (1+\mbox{ad}(v(n-1)_{B_s})), (1+\mbox{ad}(w_{C_1})), \dotsc, (1+\mbox{ad}(w_{C_t}))$ in some order. 
By Lemma \ref{lemma 2.3} we have that $(1+\mbox{ad}(v(j)_B))(1+\mbox{ad}(v(i)_A))=(1+\mbox{ad}(v(i)_A))(1+\mbox{ad}(v(j)_B))(1+{\binom{j+1}{n-i}} \mbox{ad}(v(i \oplus j)_{A \sqcup B}))$ and $(1+\mbox{ad}(w_B))(1+\mbox{ad}(v(i)_A)=(1+\mbox{ad}(v(i)_A))(1+\mbox{ad}(w_B))(1+\mbox{ad}(v(i+1)_{A \sqcup B}))$, if $0 \leqslant i \leqslant n-2$ or $(1+\mbox{ad}(w_B))(1+\mbox{ad}(v(n-1)_A))=(1+\mbox{ad}(v(n-1)_A))(1+\mbox{ad}(w_B))(1+\mbox{ad}(w_{A\sqcup B}))$, if $i=n-1$. We can thus collect the terms so that

\begin{eqnarray*}
g &= & (1+\mbox{ad}(v(0)_{A_1})) \dotsm (1+\mbox{ad}(v(0)_{A_r})) \dotsm (1+\mbox{ad}(v(n-1)_{B_1})) \dotsm \\ && (1+\mbox{ad}(v(n-1)_{B_s})) \cdot
 (1+\mbox{ad}(w_{C_1})) \dotsm (1+\mbox{ad}(w_{C_t})) \cdot h_1 \dotsm h_m,
\end{eqnarray*}

\noindent where $h_i$ are of the form $1+\mbox{ad}(v(i)_D)$, with $0 \leqslant i \leqslant n-1$, or of the form $1+ad(w_D)$, where $D$ is a modified union of at least two sets from $$ \mathcal{S}=\{A_1, \dotsc, A_r, \dotsc, B_1, \dotsc, B_s, C_1, \dotsc, C_t\}.$$ Thus,
\[g=a_0 a_1 \dotsm a_{n-1} a h,\]
where $a_i \in \mathcal{V}_i$, for $0 \leqslant i \leqslant n-1$, $a \in \mathcal{W}$ and $h$ is a product of elements of the form $1+\mbox{ad}(v(i)_D)$, with $0 \leqslant i \leqslant n-1$, or of the form $1+ad(w_D)$, where $D$ is a modified union of at least two sets from $\mathcal{S}$. 

Repeating this collection process we get
\[g=b_0 b_1 \dotsm b_{n-1} b k,\]
where $b_i \in \mathcal{V}_i$, for $0 \leqslant i \leqslant n-1$, $b \in \mathcal{W}$ and $k$ is a product of elements of the form $1+\mbox{ad}(v(i)_E)$, where $0 \leqslant i \leqslant n-1$, or of the form $1+ad(w_E)$, where $E$ is a modified union of at least three sets from $\mathcal{S}$.

Continuing in this manner we conclude that after $k$ steps 
\[g=c_0 c_1 \dotsm c_{n-1} c f,\]
where $c_i \in \mathcal{V}_i$, for $0 \leqslant i \leqslant n-1$, $c \in \mathcal{W}$ and $f$ is a product of elements of the form $1+\mbox{ad}(v(i)_H)$, where $0 \leqslant i \leqslant n-1$, or of the form $1+ad(w_H)$, where $H$ is a modified union of at least $k+1$ sets from $\mathcal{S}$. However, any modified union of at least $r+\cdots +s+t+1 $ sets from $\mathcal{S}$ is trivial and thus $f=1$ when $k=r+\cdots +s+t$. This completes the proof.
\end{proof}

\begin{thm}\label{thm 2.6}
The element $1+\mbox{ad}(x)$ is a left 3-Engel element in $G$ such that ${\langle 1+\mbox{ad}(x) \rangle}^G$ is not nilpotent.
\end{thm}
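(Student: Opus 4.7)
The statement has two parts: the left 3-Engel property and the non-nilpotency of $\langle a\rangle^G$, where $a:=1+\mbox{ad}(x)$. I treat them separately.

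For the 3-Engel property, by the Proposition I write an arbitrary $g\in G$ as $g=a^{\epsilon}rs$ with $r\in V:=\mathcal{V}_0\cdots\mathcal{V}_{n-1}$ and $s\in\mathcal{W}$. Since $a$ commutes with itself and, by Lemma~\ref{lemma 2.3}(b), with every element of $V$, the identity $[xy,z]=[x,z]^y[y,z]$ reduces the problem to showing $[s,a,a,a]=1$ for $s\in\mathcal{W}$. Writing $s=\prod_i(1+\mbox{ad}(w_{A_i}))$ and expanding $[s,a]$ via commutator identities and Lemma~\ref{lemma 2.3}(a),(d),(e), I then collect (again via the Proposition) into $[s,a]=r_1 s_1$ with $r_1\in V$ and $s_1\in\mathcal{W}$; every $\mathcal{W}$-generator $1+\mbox{ad}(w_D)$ of $s_1$ has $D$ a modified union of at least $n+1$ of the $A_i$'s, produced by an increasing chain of $n+1$ successive $w$-conjugations that drives $v(0)\to v(1)\to\cdots\to v(n-1)\to w$. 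Iterating, $[[s,a],a]=[s_1,a]=r_2 s_2$, and each $\mathcal{W}$-generator $1+\mbox{ad}(w_E)$ of $s_2$ corresponds to $E$ being a modified union of $(n+1)^2$ of the original $A_i$'s partitioned into $n+1$ blocks of size $n+1$. The key combinatorial claim is that the total multiplicity of each such $1+\mbox{ad}(w_E)$ in $s_2$ — counting both the primary contributions from subsequences of $s_1$-factors and the secondary contributions from collecting $V$-factors past $W$-factors — is even modulo $2$. Granting this, $s_2=1$, so $[[s,a],a]\in V$ commutes with $a$ by Lemma~\ref{lemma 2.3}(b), and $[[[s,a],a],a]=1$.

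For the non-nilpotency of $N:=\langle a\rangle^G$, the plan is to exhibit, for every $k\geq 0$, a non-trivial element of $\gamma_{k+1}(N)$. From Lemma~\ref{lemma 2.3}(a), every $1+\mbox{ad}(v(0)_A)=[a,1+\mbox{ad}(w_A)]$ lies in $N$; by normality of $N$ in $G$ together with Lemma~\ref{lemma 2.3}(d),(e), a short induction on $i$ shows $1+\mbox{ad}(v(i)_D)\in N$ whenever $0\leq i\leq n-1$ and $D$ is a modified union of $i+1$ pairwise disjoint non-empty subsets of $\NN$. Now fix a family $\{B_0,A_k^{(j)}:k\geq 0,\,1\leq j\leq n\}$ of mutually disjoint non-empty subsets of $\NN$, set $A_k:=A_k^{(1)}\sqcup\cdots\sqcup A_k^{(n)}$ (so $g_k:=1+\mbox{ad}(v(n-1)_{A_k})\in N$), and define $h_0:=1+\mbox{ad}(v(0)_{B_0})\in N$ together with $h_k:=[h_{k-1},g_{k-1}]$ for $k\geq 1$. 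Applying Lemma~\ref{lemma 2.3}(c) with $(i,j)=(0,n-1)$, for which $\binom{n}{n}=1$ and $0\oplus(n-1)=0$ inside $\{0,\ldots,n-2\}$, induction yields
\[
h_k=1+\mbox{ad}\bigl(v(0)_{B_0\sqcup A_0\sqcup\cdots\sqcup A_{k-1}}\bigr),
\]
which is non-trivial by the pairwise disjointness of the subscripts. Since $h_k\in\gamma_{k+1}(N)$ by construction, the lower central series of $N$ never terminates, and $N$ is not nilpotent.

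The main obstacle is clearly the parity claim in the 3-Engel part. Verifying it amounts to a Kummer/Lucas-type count in the spirit of Theorem~\ref{Lucas}; concretely, the multinomial coefficient $\frac{((n+1)^2)!}{((n+1)!)^{n+1}(n+1)!}$ has positive $2$-adic valuation $(m-1)(n+1)$ since $n+1=2^m-1$, which explains why the primary contributions cancel modulo $2$. One must then show that the secondary contributions from the collection process align to preserve this cancellation.
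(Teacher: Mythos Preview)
Your non-nilpotency argument is correct and in fact tidier than the paper's: you produce commutators $h_k=[h_{k-1},g_{k-1}]$ with \emph{all} entries explicitly in $N=\langle 1+\ad(x)\rangle^G$, so $h_k\in\gamma_{k+1}(N)$ is immediate. The paper instead exhibits a long commutator alternating between $1+\ad(x)\in N$ and elements $1+\ad(w_{A_i})\in G$, relying implicitly on the fact that each $\gamma_k(N)$ is normal in $G$ so that the $w$-entries preserve depth while each occurrence of $1+\ad(x)$ pushes one step further down. Both work; yours is more self-contained.

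The left $3$-Engel part, however, has a genuine gap, and your chosen route is far harder than necessary. You reduce to a parity claim about the $\mathcal{W}$-component $s_2$ of $[[s,a],a]$ after collection, but you do not prove it: you compute the $2$-adic valuation of one multinomial coefficient (governing what you call the ``primary'' contributions) and then simply assert that the ``secondary'' contributions from the collection process must align. This is exactly the heart of the matter, and nothing you have written establishes it; indeed it is not even clear that $s_2=1$ is the right target, since $[[[s,a],a],a]=1$ could hold without $[[s,a],a]$ lying in $V$.

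The paper bypasses all of this with a two-line computation in the associative enveloping algebra. One observes that $(1+\ad(x))^g=1+\ad(y)$ for an explicit $y\in L^*$; since this is a conjugate of an involution, $(1+\ad(y))^2=1$ forces $\ad(y)^2=0$. Combined with $\ad(x)^2=0$ and the crucial identity $\ad(x)\ad(z)\ad(x)=0$ for all $z\in L^*$ (Lemma~\ref{lemma 2.2}(b)), one computes directly
\[
[(1+\ad(x))^g,1+\ad(x),1+\ad(x)]=\bigl((1+\ad(x))(1+\ad(y))\bigr)^4=(1+\ad(y)\ad(x)+\ad(x)\ad(y)+\ad(y)\ad(x)\ad(y))^2=1.
\]
No commutator collection, no parity count. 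You should replace your collection argument with this direct calculation; the key lemma you are missing is precisely $\ad(x)\ad(y)\ad(x)=0$.
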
 

\begin{proof}
Showing that $1+\mbox{ad}(x)$ is a left $3$-Engel element in $G$ is equivalent to showing that $[(1+\mbox{ad}(x))^{g},1+\mbox{ad}(x)]$ commutes with $1+\mbox{ad}(x)$ for all $g\in G$. 
Let $g=h(1+\mbox{ad}(w_{A_1}))\dotsm (1+\mbox{ad}(w_{A_k}))$ be an arbitrary element in $G$, where $h \in \langle (1+\mbox{ad}(x)) \rangle \mathcal{V}_0 \dotsm \mathcal{V}_{n-1}$. We want to show that 
\[ [(1+\mbox{ad}(x))^g , 1+\mbox{ad}(x),  1+\mbox{ad}(x)]=1.\]
Let $y \in L^{*}$. Then
\[(1+\mbox{ad}(y))^{1+\mbox{ad}(w_A)}=(1+\mbox{ad}(w_A))(1+\mbox{ad}(y))(1+\mbox{ad}(w_A))=1+\mbox{ad}(y)+\mbox{ad}(y \cdot w_A).\]
Notice that $(1+\mbox{ad}(x))^g=(1+\mbox{ad}(x))^{(1+\mbox{ad}(w_{A_1}))\dotsm (1+\mbox{ad}(w_{A_n}))}$, since by Lemma \ref{lemma 2.3} we know that $1+\mbox{ad}(x)$ commutes with all elements of the form $1+\mbox{ad}(v(i)_B)$, for $0 \leqslant i \leqslant n-1$. Then, by induction we obtain that 
\[(1+\mbox{ad}(x))^g=1+\mbox{ad}(y)\]
where 
\begin{eqnarray*}
y &=& x + \displaystyle\sum_{1 \leqslant i \leqslant k} v(0)_{A_i} + \displaystyle\sum_{1 \leqslant i < j \leqslant k} v(1)_{A_i \sqcup A_j} + \dotsb + \displaystyle\sum_{1\leqslant i(1) < i(2) < \dotsc < i(n+1) \leqslant k} w_{A_{i(1)} \sqcup \dotsc \sqcup A_{i(n+1)}}.
\end{eqnarray*}
Since $\mbox{ad}(x) \mbox{ad}(y) \mbox{ad}(x)=0$, the commutator of $(1+\mbox{ad}(x))^g$ with $(1+\mbox{ad}(x))$ is 
\begin{eqnarray*} (1+\mbox{ad}(y))(1+\mbox{ad}(x))(1+\mbox{ad}(y))(1+\mbox{ad}(x))&=&1+\mbox{ad}(y)\mbox{ad}(x)+\mbox{ad}(x)\mbox{ad}(y) \nonumber \\ & & +\mbox{ad}(y)\mbox{ad}(x)\mbox{ad}(y). 
\end{eqnarray*}
Then, 
\begin{eqnarray*} 
[(1+\mbox{ad}(x))^g, 1+\mbox{ad}(x), 1+\mbox{ad}(x)] &=& [(1+\mbox{ad}(x))(1+\mbox{ad}(y))]^4\\
&=& (1+\mbox{ad}(y) \mbox{ad}(x)+ \mbox{ad}(x) \mbox{ad}(y)+ \mbox{ad}(y) \mbox{ad}(x) \mbox{ad}(y))^2\\
&=& 1,
\end{eqnarray*}
using the fact that $\mbox{ad}(x) \mbox{ad}(y) \mbox{ad}(x)=\mbox{ad}(y)^{2}=\mbox{ad}(x)^{2}=0$. \\ \\

However, the normal closure of $1+\mbox{ad}(x)$ in  $G$ is not nilpotent, as for $A_i=\{i\}$ we have

\begin{eqnarray*} 
&& [1+\mbox{ad}(w_{A_0}), 1+\mbox{ad}(x), 1+\mbox{ad}(w_{A_1}), 1+\mbox{ad}(w_{A_2}), \dotsc, 1+\mbox{ad}(w_{A_{n}}), \dotsc, \\
&& 1+\mbox{ad}(x), 1+\mbox{ad}(w_{A_{mn+1}}), 1+\mbox{ad}(w_{A_{mn+2}}),\dotsc, 1+\mbox{ad}(w_{A_{(m+1)n}})]\\
&=& 1+\mbox{ad}(w_B),
\end{eqnarray*}
where $B=A_0 \sqcup A_1 \sqcup \dotsc \sqcup A_{(m+1)n}=\{0, 1, 2, \dotsc, (m+1)n\}$.
\end{proof}

One might wonder if the nilpotency class of the subgroup generated by $r$ conjugates is unbounded for the  family we have  constructed. That turns out not to be the case. Our next aim is to show that the subgroup generated by any 
 $r$ conjugates $(1+\mbox{ad}(x))^{g_{1}},\ldots ,(1+\mbox{ad}(x))^{g_{r}}$ of $1+\mbox{ad}(x)$ in $G$ will be nilpotent of $r$-bounded class.  

We first work in a more general setting. For each $e\in E$ and $\emptyset \not =A\subseteq {\mathbb N}$, let $e(A)\in \mbox{End}(L^{*})$ where
\begin{eqnarray*} 
    v(i)_{B} \cdot e(A) &=& (v(i) \cdot e)_{B\sqcup A}\\
    w_B \cdot e(A) &=& (w \cdot e)_{B \sqcup A}\\
    x \cdot e(A) &=& (x \cdot e)_A,
\end{eqnarray*}
for $0 \leqslant i \leqslant n-1$. \\ \\
Then let $E^{*}=\langle \mbox{ad}(x), e(A):\,e\in E\mbox{ and }\emptyset \not =A\subseteq {\mathbb N}\rangle$. As $L^{*}$ is locally nilpotent, one sees readily that
the elements of $E^{*}$ are nilpotent and thus $1+E^{*}$ is a subgroup of $\mbox{End}(L^{*})$. \\


Despite the fact that the normal closure of $1+\mbox{ad}(x)$ in $G$ is not nilpotent, it turns out that the nilpotency class of the subgroup generated by any $r$ conjugates grows linearly with respect to $r$. In order to see this we must first introduce some more notation. In relation with the $r$ conjugates we let $A_{1}, A_{2}, \ldots ,A_{r}$ be any $r$ subsets of ${\mathbb N}$. 
For each $r$-tuple $(i1,i2,\ldots ,ir)$ of non-negative integers and each $e\in E$ we let 

 \begin{eqnarray*}
 e^{(i1,\ldots ,ir)} &=& \sum_{\tiny \begin{array}{l}B_{1}\subseteq A_{1}\\
 |B_1|=i1													\end{array}}
\ldots 
\sum_{\tiny \begin{array}{l}B_{k}\subseteq A_{r}\\
|B_r|=ir													\end{array}}e(B_{1}\sqcup B_{2}\sqcup \cdots \sqcup B_{r}).
\end{eqnarray*}
								                                
Notice that 
	\begin{eqnarray*}\label{1}
	    e^{(i1,\ldots ,ir)} \cdot f^{(j1,\ldots ,jr)} &=& {\tiny i1+j1\choose i1}\cdots {\tiny ir+jr\choose ir}\normalsize (ef)^{(i1+j1,\ldots ,ir+jr)}.
	\end{eqnarray*}

Consider the $r$ conjugates of $(1+\mbox{ad}(x))$ in $G$. Recall that each conjugate is of the from $(1+\mbox{ad}(x))^{(1+\mbox{ad}(w_{C_{1}})\cdots (1+\mbox{ad}(w_{C_{j}}))}$. Without loss of generality one can assume that each $C_{k}$ is a singleton set. The following argument also works for the more general case. Let 
     \[A_{1}=\{1,\ldots ,k_{1}\},\ A_{2}=\{k_{1}+1,\ldots ,k_{2}\},\ldots , A_{r}=\{k_{r-1}+1,\ldots ,k_{r}\}\]	
and
	\[e_1=\mbox{ad}(v(0)), e_2=\mbox{ad}(v(1)), \dotsc, e_n=\mbox{ad}(v(n-1)), e_{n+1}=\ad(w).\]															
Then we have seen (see the proof of Theorem \ref{thm 2.6}) that 
\begin{eqnarray*}
        (1+\mbox{ad}(x))^{(1+\mbox{ad}(w_{1}))\cdots (1+\mbox{ad}(w_{k_{1}}))} & = &
				   1+\mbox{ad}(x)+e_{1}^{(1,0,\ldots ,0)}+ \dotsm+ e_{n+1}^{(n+1,0,\ldots ,0)}\\
                                     & \vdots & \\
        (1+\mbox{ad}(x))^{(1+\mbox{ad}(w_{k_{r-1}+1}))\cdots (1+\mbox{ad}(w_{k_{r}}))}																		& = & 1+\mbox{ad}(x)+e_{1}^{(0,\ldots ,0,1)}+\dotsm + e_{n+1}^{(0,\ldots ,0,n+1)}.
\end{eqnarray*}

Let
\[f_{(i,k)}=e_i^{(0, \dotsc, 0,i,0,\dotsc, 0)},\]
where $i$ is the $k$-th coordinate and $1 \leqslant i \leqslant n+1$. Let 
\[F=\l f_{(j,k)}=e_j^{(0, \dotsc, 0, j, 0, \dotsc, 0)}: 1 \leqslant j \leqslant n+1, \quad 1\leqslant k \leqslant r \r .\]
Our aim is to find an upper bound for the nilpotence class of $F$. For this we need to understand better the two aspects of multplying
$ e^{(i1,\ldots ,ir)}$ and  $f^{(j1,\ldots ,jr)}$. These are

\begin{enumerate}
    \item[A.] Under which conditions the binomial coefficients are non-trivial ;
    \item[B.] Under which conditions is the Lie product $e_{i} \cdot e_{j}$ non trivial.
\end{enumerate}

\mbox{}\\
The next two lemmas will help clarify these questions. 
\begin{lem}\label{bczero}
    If $i+j \geqslant n+2$, then $\binom{i+j}{i}=0$,where $0 \leqslant i,j \leqslant n+1$.
\end{lem}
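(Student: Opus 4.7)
The statement is about binomial coefficients mod $2$ (we are working over $\FF=\FF_{2}$), so the content is: $\binom{i+j}{i}$ is even whenever $0\leqslant i,j\leqslant n+1=2^{m}-1$ and $i+j\geqslant n+2=2^{m}$. My plan is to prove the contrapositive via Lucas' Theorem.

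First, I would observe the following base-$2$ facts about the relevant bounds. Since $n+1=2^{m}-1=1+2+\cdots +2^{m-1}$, the integers $i$ and $j$ in the range $[0,n+1]$ are exactly those whose binary expansions use only the positions $0,1,\ldots ,m-1$. So I can write
\[
i=\sum_{k=0}^{m-1}a_{k}2^{k},\qquad j=\sum_{k=0}^{m-1}b_{k}2^{k},\qquad a_{k},b_{k}\in\{0,1\}.
\]

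Next, I would apply Lucas' Theorem (Theorem \ref{Lucas}) to $\binom{i+j}{i}$. Assuming for contradiction that $\binom{i+j}{i}$ is odd, Lucas gives $i\leqslant_{2}i+j$. Writing $j=(i+j)-i$ and applying the same condition to $\binom{i+j}{j}=\binom{i+j}{i}$, we also get $j\leqslant_{2}i+j$. Together these two inequalities force the addition $i+j$ to produce no carries: at each position $k$ one must have $a_{k}+b_{k}\leqslant 1$, for otherwise the $k$-th digit of $i+j$ would be strictly smaller than $a_{k}$ or $b_{k}$ (digit $0$ after a carry, while $a_{k}=b_{k}=1$), violating $\leqslant_{2}$.

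Consequently the binary expansion of $i+j$ agrees digit-by-digit with the bitwise OR of $i$ and $j$, and in particular $i+j$ has no digits beyond position $m-1$. Hence
\[
i+j\leqslant \sum_{k=0}^{m-1}2^{k}=2^{m}-1=n+1,
\]
contradicting the hypothesis $i+j\geqslant n+2$. Therefore $\binom{i+j}{i}$ must be even, i.e.\ zero in $\FF$. I do not expect any serious obstacle: the argument is a direct digit-counting application of Lucas, with the only subtle point being the remark that both $i\leqslant_{2}i+j$ and $j\leqslant_{2}i+j$ together preclude any carry in the base-$2$ addition.
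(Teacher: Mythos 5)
Your proof is correct and follows essentially the same route as the paper: write $i,j$ in binary using positions $0,\dotsc,m-1$, apply Lucas' Theorem to conclude that oddness of $\binom{i+j}{i}$ forces $a_k+b_k\leqslant 1$ for all $k$ (the paper phrases this as $i\leqslant_2 i+j$ holding iff no position has $\alpha_l=\beta_l=1$), and hence $i+j\leqslant 2^m-1=n+1$. The only nitpick is that your ``at each position'' justification should really be applied at the \emph{least} position generating a carry (where no incoming carry exists, so the digit of $i+j$ drops to $0$), since at higher positions an incoming carry can make the digit $1$; also note that a single Lucas condition $i\leqslant_2 i+j$ already suffices, as in the paper.
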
 

\begin{proof}
Suppose that
\begin{align*}
    i&=\alpha_0 +2\alpha_1+\dotsc+2^{m-1}\alpha_{m-1}\\
    j&=\beta_0+2\beta_1+\dotsc+2^{m-1}\beta_{m-1}.
\end{align*}
We have that $\binom{i+j}{i}$ is odd if and only if $i \leqslant_2 i+j$. The latter happens if and only if there exists no $l$ such that $\alpha_l=\beta_l=1$, where $0 \leqslant l \leqslant m-1$.  In particular, for  the binomial coefficient to be non-zero we need $i+j \leqslant 1+2+\cdots 2^{m-1}=2^{m}-1=n+1$. 
\end{proof}

\begin{lem}
If $i+j \leqslant n-1$ and $1 \leqslant i,j \leqslant n$, then $e_i \cdot e_j=0$.
\end{lem}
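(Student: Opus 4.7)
The plan is to reduce the claim to the vanishing of a single structure constant in the Lie algebra $L$ and then to apply Lucas's theorem. By the choice of labelling, $e_i=\mbox{ad}(v(i-1))$ for $1\leqslant i\leqslant n$. Since $\mbox{ad}\colon L\to E$ is a Lie algebra homomorphism, the Lie product of operators satisfies
\[
e_i\cdot e_j \;=\; \mbox{ad}\bigl(v(i-1)\cdot v(j-1)\bigr),
\]
and because $Z(L)=0$ the map $\mbox{ad}$ is injective. It therefore suffices to show that the element $v(i-1)\cdot v(j-1)$ of $L$ is zero.

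Next, I would substitute into the defining formula of $L$, obtaining
\[
v(i-1)\cdot v(j-1) \;=\; \binom{j}{n+1-i}\,v\bigl((i-1)\oplus(j-1)\bigr),
\]
so the entire problem collapses to verifying that the binomial coefficient $\binom{j}{n+1-i}$ is even.

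For this I would invoke Lucas's theorem. If $\binom{j}{n+1-i}$ were odd, then Lucas's theorem would force $n+1-i\leqslant_{2}j$, and in particular $n+1-i\leqslant j$, that is $i+j\geqslant n+1$. But the hypothesis $i+j\leqslant n-1<n+1$ contradicts this. Hence $\binom{j}{n+1-i}$ is even, the product $v(i-1)\cdot v(j-1)$ vanishes, and so $e_i\cdot e_j=0$.

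The main obstacle is essentially nonexistent: the whole proof is a short bookkeeping argument combining the multiplication table of $L$ with the easy direction of Lucas's theorem. The only delicate point is the index shift between the enumeration $e_{1},\dotsc,e_{n}$ and the basis vectors $v(0),\dotsc,v(n-1)$; note in addition that the hypothesis $i+j\leqslant n-1$ together with $i,j\geqslant 1$ automatically keeps both $i-1$ and $j-1$ in the admissible range $\{0,\dotsc,n-3\}$, so the defining formula for $v(i-1)\cdot v(j-1)$ applies without any further caveat.
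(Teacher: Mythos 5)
There is a genuine gap, and it lies in the very first reduction: you read $e_i\cdot e_j$ as the Lie bracket of operators and invoke ``$\mbox{ad}$ is a Lie homomorphism'' to write $e_i\cdot e_j=\mbox{ad}\bigl(v(i-1)\cdot v(j-1)\bigr)$. But in this lemma the product is the associative product in the enveloping algebra $E\leqslant \mathrm{End}(L)$, i.e.\ the composition $\mbox{ad}(v(i-1))\,\mbox{ad}(v(j-1))$. The identity you use is only valid for the commutator, $\mbox{ad}(a)\mbox{ad}(b)+\mbox{ad}(b)\mbox{ad}(a)=\mbox{ad}(a\cdot b)$ in characteristic $2$, so what your argument actually establishes is that the two adjoint maps commute when $i+j\leqslant n-1$ --- strictly weaker than the composition being zero (e.g.\ $e_1\cdot e_1=\mbox{ad}(v(0))^{2}=0$ is an instance of the lemma about which commutators say nothing). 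The associative reading is the one the paper needs: the lemma feeds into the rule $e^{(i_1,\ldots ,i_r)}\cdot f^{(j_1,\ldots ,j_r)}=\binom{i_1+j_1}{i_1}\cdots\binom{i_r+j_r}{i_r}(ef)^{(i_1+j_1,\ldots ,i_r+j_r)}$ and into the bound $F^{4r}=0$, where long associative products of the $f_{(i,k)}$ must vanish; pairwise commutativity would not kill them. (The phrase ``Lie product'' in the paper's preamble is loose; its proof computes left-normed $w\,e_ie_j$, $v(k)\,e_ie_j$, i.e.\ compositions.)

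Your Lucas computation is correct and is in fact exactly the paper's preliminary step, namely that $v(a)\cdot v(b)=0$ whenever $a+b\leqslant n-2$, because oddness of $\binom{b+1}{n-a}$ forces $a+b\geqslant n-1$. What is missing is the upgrade from this to the operator statement: one must check that the composition annihilates every basis vector of $L$. Here $x\,e_ie_j=0$ is immediate; $w\,e_ie_j=(w\cdot v(i-1))\cdot v(j-1)=v(i)\cdot v(j-1)$ vanishes by the preliminary step since $i+(j-1)\leqslant n-2$; and $v(k)\,e_ie_j=v(k)\cdot v(i-1)\cdot v(j-1)$ requires a further case analysis: if $k+(i-1)\leqslant n-2$ the first factor already dies, while if $k+(i-1)\geqslant n-1$ the first factor is a multiple of $v(k+i-n)$ and the preliminary step applies again, using $k\leqslant n-1$ and $i+j\leqslant n-1$ to get $(k+i-n)+(j-1)\leqslant n-2$. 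Without this second layer of computation the claimed conclusion $e_i\cdot e_j=0$ is not proved.
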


\begin{proof} As a preparation we first show that $v(i)\cdot v(j)=0$ if $i+j \leq n-2$. To see this
let 
    \begin{align*}
        i+1&=a_0+2a_1+\dotsc+2^{m-1}a_{m-1}\\
        j+1&=b_0+2b_1+\dotsc+2^{m-1}b_{m-1}\\
        n+1-(i+1)&=(1-a_0)+2(1-a_1)+\dotsc+2^{m-1}(1-a_{m-1}),
    \end{align*}
    where $0 \leqslant a_l, b_l\leqslant 1$ for $0 \leqslant l  \leqslant m-1$. Then
    \begin{align*}
        \binom{j+1}{n+1-(i+1)} \mbox{ is odd} &\iff 1\leqslant a_l + b_l\mbox{ for all } 0\leq l\leq m-1.
\end{align*}
In particular for $\binom{j+1}{n+1-(i+1)}$ to be odd we need $ i+1+j+1 \geqslant 1+2+\cdots +2^{m-1}=2^{m}-1=n+1$. That is we need $i+j\geqslant n-1$. Thus if $i+j \leqslant n-2$ we have $v(i) \cdot v(j)=0$. Having established this preliminary result 
we turn to $e_{i} \cdot e_{j}$ where $1\leq i,j\leq n$. Firstly, 
\begin{eqnarray*}
    w e_ie_j &=& w v(i-1) v(j-1)= v(i) v(j-1) \\
    &=& \binom{j}{n+1-(i+1)} v(i \oplus (j-1))
\end{eqnarray*} 
and so it immediately follows from the result above that $w e_ie_j=0$, if $i+j-1\leqslant n-2$, that is if $i+j \leqslant n-1$. Then, for $0 \leqslant k \leqslant n-1$, we have
\begin{eqnarray*}
    v(k) e_i \cdot e_j &=&  v(k) v(i-1) v(j-1).
\end{eqnarray*}
From our preliminary  result above we know that $v(k)v(i-1)=0$, if $k+ (i-1) \leqslant n-2$. We can therefore assume that $k+(i-1) \geqslant n-1$ and hence $k \oplus (i-1) =k+(i-1)-(n-1)$. Therefore,
\begin{eqnarray*}
    v(k) e_i \cdot e_j &=& \binom{i}{n+1-(k+1)} v(k + (i-1)-(n-1)) v(j-1)\\
    &=& \binom{i}{n+1-(k+1)} v(k+i-n) v(j-1)\\
\end{eqnarray*}
which, by our preliminary result again, is trivial when $k+i-n+j-1 \leqslant n-2$, that is if $k+i+j \leqslant 2n-1$. Given that $0 \leqslant k \leqslant n-1$ we have in particular that the product is trivial when $i+j\leqslant n-1$.
 Hence, $e_i \cdot e_j=0$ if $i+j \leqslant n-1$.
\end{proof}

From these two lemmas we get the following result.
\begin{lem}
Let $1\leq i,j\leq n$. If  $f_{(i,k)} \cdot f_{(j,s)}$ is nonzero, then $n\leqslant i+j\leqslant n+1$. 
\end{lem}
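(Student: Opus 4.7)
The plan is to unfold the product $f_{(i,k)}\cdot f_{(j,s)}$ via the multiplication rule
\[
e^{(i_1,\ldots,i_r)}\cdot f^{(j_1,\ldots,j_r)} \;=\; \binom{i_1+j_1}{i_1}\cdots\binom{i_r+j_r}{i_r}\,(ef)^{(i_1+j_1,\ldots,i_r+j_r)}
\]
established earlier, and then to combine the two preceding lemmas.

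Substituting the multi-indices of $f_{(i,k)}=e_i^{(0,\ldots,i,\ldots,0)}$ (with $i$ in position $k$) and $f_{(j,s)}=e_j^{(0,\ldots,j,\ldots,0)}$ into the formula, the product $f_{(i,k)}\cdot f_{(j,s)}$ becomes a scalar multiple of $(e_i\cdot e_j)^{(\cdot)}$. When $k=s$ the scalar reduces to the single nontrivial binomial $\binom{i+j}{i}$, with all other positions contributing $\binom{0}{0}=1$; when $k\neq s$ the binomials in positions $k$ and $s$ contribute $\binom{i}{i}=\binom{j}{0}=1$ and the scalar is trivially $1$. In either case, for the product to be nonzero we need both the scalar factor and the composition $e_i\cdot e_j$ in $E$ to be nonzero.

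The lower bound $i+j\geq n$ is then immediate from the preceding lemma: since $e_i\cdot e_j=0$ whenever $i+j\leq n-1$, nonvanishing of the composition forces $i+j\geq n$. The upper bound $i+j\leq n+1$ is provided by Lemma~\ref{bczero}: since $\binom{i+j}{i}$ vanishes in $\mathbb{F}_2$ as soon as $i+j\geq n+2$, the scalar factor dies in the range $i+j\geq n+2$. Combining these two bounds yields $n\leq i+j\leq n+1$ as required.

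The main obstacle, to the extent that there is one, is simply the bookkeeping through the multiplication formula — making sure the correct binomial coefficient is identified as controlling the upper bound and correctly pairing it with the Lucas-type vanishing coming from the previous lemma. Once the two vanishing mechanisms are cleanly separated (binomial coefficients governing the upper end, the Lie product governing the lower end), the statement is essentially a transparent combination of the two preceding results.
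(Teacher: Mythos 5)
Your lower-bound argument is exactly the derivation the paper intends (the paper offers no explicit proof of this statement, saying only that it follows from the two preceding lemmas): unfold the product via the superfix multiplication rule and invoke $e_i\cdot e_j=0$ for $i+j\leqslant n-1$. That part is fine. The gap is in the upper bound, and your own case analysis exposes it: when $k\neq s$ the scalar factor is $\binom{i}{i}\binom{j}{0}=1$, so no binomial coefficient of the form $\binom{i+j}{i}$ ever appears and Lemma \ref{bczero} gives you nothing. Your closing claim that ``in either case'' the scalar dies once $i+j\geqslant n+2$ is therefore unjustified for $k\neq s$; the only vanishing mechanism available there is $e_i\cdot e_j=0$ in $E$, and the preceding lemma only yields this for $i+j\leqslant n-1$, not for large $i+j$.

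In fact the upper bound genuinely fails in the mixed case rather than merely being unproved: since $w\cdot v(n-1)=w$, the operator $e_n^{2}=\ad(v(n-1))^{2}$ is nonzero (indeed $w\,e_ne_n=w$), so for $k\neq s$ and $|A_k|,|A_s|\geqslant n$ one gets, for $D$ disjoint from $A_k\cup A_s$, that $w_D\, f_{(n,k)}f_{(n,s)}=\sum_{B\subseteq A_k,\,|B|=n}\ \sum_{C\subseteq A_s,\,|C|=n} w_{D\sqcup B\sqcup C}\neq 0$, although $i+j=2n>n+1$. So the upper bound should really only be asserted when $k=s$ (where your binomial argument is correct); this is consistent with how the lemma is used afterwards, since the proof of the nilpotency bound for $F$ invokes only the lower bound $i_l+j_l\geqslant n$ and handles the upper end directly through Lemma \ref{bczero} applied to the coordinate totals $t_1,\dotsc,t_r$. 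To repair your write-up, either restrict the upper-bound claim to $k=s$ or prove only the lower bound, which is all that is needed later.
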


We are now ready for establishing the linear upper bound for the nilpotence class of $F$.

\begin{lem}\label{lem4r}
  $F$ is nilpotent of a class at most $4r-1$.  
\end{lem}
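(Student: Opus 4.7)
The plan is to bound the length $t$ of any non-vanishing associative monomial
\[g = f_{(j_1,k_1)} \cdot f_{(j_2,k_2)} \cdots f_{(j_t,k_t)}\]
in the generators of $F$, showing $t \leq 4r-1$. First I would iterate the product rule
\[e^{(i_1,\ldots,i_r)}\cdot f^{(j_1,\ldots,j_r)} = \binom{i_1+j_1}{i_1}\cdots \binom{i_r+j_r}{i_r}(ef)^{(i_1+j_1,\ldots,i_r+j_r)}\]
to rewrite $g$ in the compact form
\[g = C\cdot(e_{j_1}e_{j_2}\cdots e_{j_t})^{(s_1,\ldots,s_r)},\]
where $s_p = \sum_{l:\,k_l=p} j_l$ and $C\in\FF$ is a product, over coordinates $p$, of multinomials $\binom{s_p}{j_{l_1},\ldots,j_{l_{m_p}}}$ indexed by the $l$'s with $k_l=p$ (taken in order of appearance).

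I would then read off two necessary conditions for $g\neq 0$. First, $C$ must be odd: by Lucas' Theorem this is equivalent to requiring that within each coordinate $p$ the set of $j_l$'s with $k_l=p$ has pairwise disjoint base-$2$ expansions, which in particular forces $s_p\leq 2^m-1 = n+1$, and summing over $p$ gives
\[\sum_{l=1}^{t} j_l = \sum_{p=1}^{r} s_p \leq r(n+1).\]
Second, the associative product $e_{j_1}\cdots e_{j_t}$ must be nonzero in $E$, hence every adjacent factor $e_{j_l} e_{j_{l+1}}$ is nonzero. By the preceding lemma this forces $j_l+j_{l+1}\geq n$ whenever $1\leq j_l,j_{l+1}\leq n$; and when either index equals $n+1$ the inequality $j_l+j_{l+1}\geq n+1\geq n$ is automatic.

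Summing the pair bounds over $l=1,\ldots,t-1$ and using the sum bound together with $j_1,j_t\geq 1$ gives
\[(t-1)n \;\leq\; \sum_{l=1}^{t-1}(j_l+j_{l+1}) \;=\; 2\sum_{l=1}^{t}j_l - j_1 - j_t \;\leq\; 2r(n+1) - 2.\]
Rearranging yields $t \leq 2r+1+(2r-2)/n$, and since $n = 2^m-2 \geq 2$ this simplifies to $t\leq 4r-1$, so every $4r$-fold product in $F$ vanishes and $F$ is nilpotent of class at most $4r-1$. The only delicate point in this plan is verifying the adjacent-pair lower bound in the presence of $e_{n+1}=\mbox{ad}(w)$, since the previous lemma was formulated only for indices in $\{1,\ldots,n\}$; the observation that $n+1\geq n$ handles this case without any extra work. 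The resulting bound $4r-1$ is in fact not sharp (the same argument yields $3r$ for $n\geq 2$), but it is the clean uniform bound the authors adopt.
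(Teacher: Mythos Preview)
Your argument is correct and rests on the same two constraints the paper uses: the upper bound $s_p\le n+1$ on each coordinate sum (from Lemma~\ref{bczero}/Lucas) and the lower bound $j_l+j_{l+1}\ge n$ on consecutive index pairs (from the preceding lemma). The paper combines these slightly differently: it first discards any factor $f_{(n+1,k)}$, then groups a length-$2u$ product into $u$ \emph{disjoint} pairs to get $\sum_l t_l\ge nu$, and finishes with an averaging step ($\max_l t_l\ge nu/r\ge n+2$ once $u=2r$). You instead keep all $t-1$ \emph{overlapping} consecutive pairs and compare $(t-1)n\le 2\sum j_l-2\le 2r(n+1)-2$ directly. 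Your version is a little sharper (as you note, it already gives $t\le 3r$) and has the pleasant side effect of absorbing the $e_{n+1}$ case uniformly via the trivial inequality $n+1+1\ge n$, rather than needing a separate elimination step.
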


\begin{proof}
Notice that by Lemma \ref{bczero} we have that $f_{(n+1,k)} \cdot f_{(j,s)}=0$ for any $1\leq j\leq n+1$. We thus only need to consider products of elements $f_{(i,k)}$ where $1\leq i\leq n$. Take any such product of even length $2u$ where $u$ is going to be determined later. Suppose the product is 
\begin{equation}\label{eq1}
    (f_{(i_1,k_1)} f_{(j_1,s_1)}) \dotsm (f_{(i_u,k_u)} f_{(j_u,s_u)}),
\end{equation}
where $1 \leqslant i_1, \dotsc,i_u, \dotsc, j_1, \dotsc, j_u \leqslant n$ and $1 \leqslant k_1, \dotsc, k_u, \dotsc, s_1, \dotsc, s_u \leqslant r$. We want to determine $u$ so that this product becomes 0. From Lemma 3.7 we know that  for this product
to be non-trivial we need  $ i_l+j_l \geqslant n$, for all $1 \leqslant l \leqslant u$. For $1\leq l\leq r$, let $t_l$ be the sum of all the $l$th coordinates of the superfixes of the $2u$ elements in the product.   For this to be non-zero we need
\begin{equation*}\label{eq2}
t_1+\dotsc +t_r= (i_1+j_1)+ \dotsm + (i_u + j_u) \geqslant nu. 
\end{equation*}
If one of $t_1,\dotsc, t_r$ is greater than $n+1$ then Lemma 3.5 implies that the product is zero. We need to find how big $u$ has to be so that this happens. Notice that the largest value of $t_1, \dotsc, t_r$ is greater or equal to the mean value and this is at least $\frac{nu}{r}$. Therefore, it suffices that 
\[ \frac{nu}{r} \geqslant n+2.\]
This holds when $u=2r$. Hence, $F^{2(2r)}=F^{4r}=0$.
\end{proof}

From this it is not difficult to obtain a linear upper bound for nilpotence class of the group generated by $r$ conjugates of
$1+\mbox{ad}(x)$. First we extend the analysis of $F$ to the subalgebra 
$Q=\l \ad(x), F \r $ of $E^*$.  If no element $f_{(n+1,k)}$ occurs then any product of elements of $Q$ of length $4r+1$ is trivial. Here we are using the fact that $\mbox{ad}(x)^{2}=0$, Lemma \ref{lem4r} and the  fact that $\ad(x)$ commutes with elements of the form $f_{(i,l)}$ when $1\leq i\leq n$. Suppose therefore that at least one element $f_{(n+1,k)}$ is involved in a product of elements of $Q$. If an element of the form $f_{(i,l)}$ included, where $1 \leqslant i \leqslant n$, we can pick $f_{(n+1,k)}$ and $f_{(i,l)}$ that are of closest distance within the product. This reduces things to the following situations:
\begin{eqnarray*}
    && \dotsm f_{(n+1,k)}  \ad(x) f_{(i,l)} \dotsm,\\
    && \dotsm f_{(i,l)}  \ad(x) f_{(n+1,k)} \dotsm,\\
    && \dotsm f_{(n+1,k)} f_{(i,l)} \dotsm,\\
    && \dotsm f_{(i,l)} f_{(n+1,k)} \dotsm.
\end{eqnarray*}
These  products are trivial, as $\ad(x)$ commutes with $f_{(i,l)}$ and the products of $f_{(n+1,k)}$ and  $f_{(i,l)}$ are trivial  by Lemma \ref{bczero}. We can therefore assume that no element $f_{(i,l)}$ is involved, where $1\leq i\leq n$.  Then we only have a product in $\ad(x)$'s and elements of the form $f_{(n+1,k)}$. By Lemma 3.5 and the fact that $\ad(x)^2=0$ the product needs to alternate  between $\mbox{ad}(x)$ and elements of the form $f_{(n+1,k)}$. We will make use of the fact that 
\[\ad(x) f_{(n+1,k)}= f_{(n+1,k)}\ad(x) + f_{(1,k)}. \]
We have
\begin{eqnarray*}
\ad(x) f_{(n+1,k_1)}  \ad(x) f_{(n+1,k_2)} &=& (f_{(n+1,k_1)}\ad(x) + f_{(1,k_1)}) \ad(x) f_{(n+1, k_2)}\\
&=& f_{(n+1,k_1)}\ad(x)^2f_{(n+1, k_2)} + f_{(1,k_1)} \ad(x) f_{(n+1, k_2)}\\
&=& 0+ \ad(x)f_{(1,k_1)} f_{(n+1, k_2)}=0,
\end{eqnarray*}
where the last equality follows from Lemma \ref{bczero}. Similarly any product of the form $f_{(n+1,k_1)}  \ad(x) f_{(n+1,k_2)} \ad(x)$ is $0$. To conclude we have seen that $Q^{4r+1}=0$. Now let $H$ be a subgroup of $G$ generated by any $r$ conjugates $(1+\mbox{ad}(x))^{g_{1}},\ldots ,(1+\mbox{ad}(x))^{g_{r}}$ of $1+\mbox{ad}(x)$ in $G$. Then
\[\gamma_{4r+1}(H) \leqslant 1+Q^{4r+1}=1.\]
Thus we have the following result.

\begin{prop}Let $(1+\mbox{ad}(x))^{g_{1}},\ldots ,(1+\mbox{ad}(x))^{g_{r}}$ be any $r$ conjugates of $1+\mbox{ad}(x)$ in $G$. Then $H=\l (1+\mbox{ad}(x))^{g_{1}},\ldots ,(1+\mbox{ad}(x))^{g_{r}} \r$ is nilpotent of class at most $4r+1$. 
\end{prop}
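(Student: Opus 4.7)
The plan is to use the machinery built up in the preceding discussion and reduce the claim to the vanishing $Q^{4r+1}=0$, where $Q=\langle \mbox{ad}(x),F\rangle$ is the associative subalgebra of $E^*$ introduced above. Since $L^*$ is locally nilpotent, elements of $Q$ act nilpotently and $1+Q$ is a subgroup of $1+E^*$; general associative-to-group nilpotency then yields $\gamma_{m}(1+Q)\subseteq 1+Q^{m}$ for every $m\geqslant 1$. So if I can show that each generator $(1+\mbox{ad}(x))^{g_i}$ of $H$ lies in $1+Q$, then $H\leqslant 1+Q$ and the required bound $\gamma_{4r+1}(H)=1$ follows at once.

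The first step is to check $(1+\mbox{ad}(x))^{g_i}\in 1+Q$ for all $i$. Using the structure proposition that writes $g_i$ in the normal form $(1+\mbox{ad}(x))^{\epsilon}\cdot r_0\cdots r_{n-1}\cdot s$, and using Lemma \ref{lemma 2.3}(b) together with $\mbox{ad}(x)^2=0$, only the factor $s\in\mathcal{W}$ contributes nontrivially to the conjugation. Without loss of generality (as noted in the discussion preceding Lemma \ref{bczero}) each factor of $s$ is of the form $1+\mbox{ad}(w_{\{j\}})$ for some $j\in\mathbb{N}$, so I may partition the singletons used by the $r$ conjugates into the blocks $A_1,\ldots,A_r$ as in the paper. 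The expansion already computed in the proof of Theorem \ref{thm 2.6} then yields
\[
(1+\mbox{ad}(x))^{g_i}=1+\mbox{ad}(x)+f_{(1,i)}+f_{(2,i)}+\cdots+f_{(n+1,i)},
\]
and the right-hand side visibly lies in $1+Q$.

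The second step is the vanishing $Q^{4r+1}=0$, and this has essentially been proved in the discussion following Lemma \ref{lem4r}. I would simply assemble those ingredients: Lemma \ref{lem4r} kills any word of length $4r$ consisting purely of elements of $F$; a surviving word of length $4r+1$ must therefore contain at least one factor $\mbox{ad}(x)$; because $\mbox{ad}(x)^{2}=0$ the factors $\mbox{ad}(x)$ must alternate with factors from $F$; using the commutation $\mbox{ad}(x)f_{(n+1,k)}=f_{(n+1,k)}\mbox{ad}(x)+f_{(1,k)}$ one can move every $\mbox{ad}(x)$ past any $f_{(n+1,k)}$ at the cost of introducing an $f_{(1,k)}$; and finally Lemma \ref{bczero} kills every adjacent pair $f_{(n+1,k)}f_{(i,l)}$ with $1\leqslant i\leqslant n$. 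Running this reduction on any word of length $4r+1$ produces $0$.

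Putting the two steps together, $H\leqslant 1+Q$ forces $\gamma_{4r+1}(H)\leqslant 1+Q^{4r+1}=\{1\}$, so $H$ is nilpotent of class at most $4r+1$, as claimed. The main obstacle, namely the combinatorial bound $Q^{4r+1}=0$, has already been handled by Lemmas \ref{bczero}, 3.7 and \ref{lem4r}; what remains for the proof of the proposition is essentially the packaging argument above, so no substantial new difficulty arises.
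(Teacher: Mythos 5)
Your packaging is exactly the paper's: each conjugate equals $1+\mbox{ad}(x)+f_{(1,i)}+\cdots +f_{(n+1,i)}\in 1+Q$ with $Q=\l \mbox{ad}(x),F\r$, and then $\gamma_{4r+1}(H)\leqslant 1+Q^{4r+1}$, so everything hinges on $Q^{4r+1}=0$. That is precisely the step you do not actually prove. The ``discussion following Lemma \ref{lem4r}'' that you appeal to \emph{is} the paper's own proof of this proposition, so it cannot be cited in a blind attempt, and your substitute sketch does not close. Even granting (as the paper does) that products of $f_{(n+1,k)}$ with $f_{(i,l)}$, $1\leqslant i\leqslant n$, vanish, the rewriting you describe --- push every $\mbox{ad}(x)$ to the right via $\mbox{ad}(x)f_{(n+1,k)}=f_{(n+1,k)}\mbox{ad}(x)+f_{(1,k)}$, then discard words containing such an adjacent pair, a square of $\mbox{ad}(x)$, or a pure $F$-word of length $4r$ --- leaves terms that none of your cited facts touch. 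For the alternating word $\mbox{ad}(x)f_{(n+1,k_1)}\mbox{ad}(x)f_{(n+1,k_2)}\cdots \mbox{ad}(x)$ of length $4r+1$, the full expansion contains the term $f_{(1,k_1)}f_{(1,k_2)}\cdots f_{(1,k_{2r})}\mbox{ad}(x)$ in which every $\mbox{ad}(x)$ has been absorbed: it has length $2r+1<4r$, contains no $f_{(n+1,\cdot)}$ and only one $\mbox{ad}(x)$, so Lemma \ref{lem4r}, $\mbox{ad}(x)^{2}=0$ and the adjacency rule you extract from Lemma \ref{bczero} say nothing about it; disposing of it requires further input about products of the $f_{(1,k)}$ (the content of Lemmas 3.6--3.7), which your reduction never invokes.

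The paper avoids producing such terms by a case analysis rather than a blind rewriting: if the word contains both an $f_{(n+1,k)}$ and some $f_{(i,l)}$ with $i\leqslant n$, it picks a closest such pair (at most one $\mbox{ad}(x)$ can separate them) and concludes at once; if the word involves only $\mbox{ad}(x)$ and the $f_{(n+1,k)}$, it shows that any four consecutive letters already annihilate the word via $\mbox{ad}(x)f_{(n+1,k_1)}\mbox{ad}(x)f_{(n+1,k_2)}=\mbox{ad}(x)f_{(1,k_1)}f_{(n+1,k_2)}=0$, expanding only the first $\mbox{ad}(x)$ and killing $f_{(1,k_1)}f_{(n+1,k_2)}$ before the second $\mbox{ad}(x)$ is ever touched. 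Your Step 1 (only the $\mathcal{W}$-part of $g_i$ matters, the explicit expansion, and $\gamma_m(1+Q)\subseteq 1+Q^m$) is fine and matches the paper; the gap is that your assertion ``running this reduction on any word of length $4r+1$ produces $0$'' is exactly where the proposition's real work lies, and as stated it is unjustified.
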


{\it Acknowledgement}. The first author is partially supported by `The Norton Scholarship'. We acknowledge the EPSRC (grant number 16523160) for support. Moreover, we would like to thank Marialaura Noce for drawing our attention to Lucas' Theorem.

\vspace{0.3cm}

\end{document}